\def\ex{\mbox{ex}}
\newtheorem{thm}{Theorem}[section]
\newtheorem{lem}[thm]{Lemma}
\newtheorem{prob}[thm]{Problem}
\newtheorem{cl}{\bf Claim}[section]
\begin{document}
\title{The Tur\'an number for the edge blow-up of trees}
\author{Anyao Wang$^a$,\ \  Xinmin Hou$^{a,b}$,\ \  Boyuan Liu$^a$,\ \  Yue Ma$^a$\\
\small$^a$School of Mathematical Sciences\\
\small University of Science and Technology of China, Hefei, Anhui, 230026, PR China\\
\small $^b$CAS Wu Wen-Tsun Key Laboratory of Mathematics\\
\small University of Science and Technology of China, Hefei, Anhui, 230026, PR China}

\maketitle

\begin{abstract}
The edge blow-up of a graph $F$ is the graph obtained from  replacing each edge in $F$ by a clique of the same size where the new vertices of the cliques are all different.
In this article, we concern about the Tur\'an problem for the edge blow-up of trees.
Erd\H{o}s et al. (1995) and Chen et al. (2003) solved the problem for stars. The problem for paths was resolved by Glebov (2011).
Liu (2013) extended the above results to cycles and a special family of trees with the minimum degree at most two in the smaller color class  (paths and  proper subdivisions of stars were included in the family).
In this article, we extend Liu's result to all the trees with the minimum degree at least two in the smaller color class. Combining with Liu's result, except one particular case, the Tur\'an problem for the edge blow-up of trees is completely resolved.  Moreover, we determine the maximum number of edges in
the family of $\{K_{1,k}, kK_2, 2K_{1,k-1}\}$-free graphs and the extremal graphs, which is an extension of a result given by Abbott et al. (1972).
\end{abstract}

\section{Introduction}
In this paper, all graphs considered are simple and finite. For a graph $G$ and a vertex $x\in V(G)$, the set of neighbors of $x$ in $G$ is denoted by $N_G(x)$, write $N_G[x]=N_G(x)\cup\{x\}$, called the closed set of neighbors of $x$. The {\it degree} of $x$, denoted by $\deg_G(x)$, is $|N_G(x)|$. Let  $\delta(G)$ and $\Delta(G)$  denote the minimum  and maximum degrees of $G$, respectively.
For a subset $X\subseteq V(G)$, let $\delta_{G}(X)=\min\{d_{G}(v) : v\in X\}$ and $\Delta_{G}(X)=\max\{d_{G}(v): v\in X\}$.
Let $e(G)$ be the number of edges of $G$. For a graph $G$ and $S, T\subset V(G)$, let $e_G(S, T)$ be the number of edges $e=xy\in{E(G)}$ with $x\in S$ and $y\in T$, if $S=T$, we use $e_G(S)$ instead of $e_G(S, S)$, and  $e_G(u, T)$ instead of $e_G(\{u\}, T)$ for convenience, the index $G$ will be omitted if no confusion from the context. For a subset $X\subseteq V(G)$ or $X\subseteq  E(G)$, let $G[X]$ be the subgraph of $G$ induced by $X$.
A {\it matching} $M$ in $G$ is a subset of $E(G)$ with $\delta(G[M])=\Delta(G[M])=1$. The {\it matching number} of $G$, denoted by $\nu(G)$, is the maximum number of edges in a matching in $G$.
For a given graph $G$, we write $\alpha(G)$ and $\beta(G)$ for the independent and  the (vertex) covering number of $G$, respectively. Write $\chi(G)$  for the chromatic number of $G$.  Given a family $\mathcal{L}$ of graphs, define $p(\mathcal{L})=\min_{L\in\mathcal{L}}\chi(L)-1$.

Given two graphs $G$ and $H$, we say that $G$ is {\it $H$-free} if $G$ does not contain an $H$ as a subgraph. The {\it Tur\'an number}, denoted by $\ex(n,H)$, is the largest number of edges of an $H$-free graph on $n$ vertices. That is,
$$\ex(n,H)=\max\{e(G):|V(G)|=n,  \mbox{ $G$ is $H$-free}\}.$$
We call an $H$-free graph with $n$ vertices and $\ex(n,H)$ edges an  {\it extremal graph} of $H$.
For positive integers $n$ and $r$ with $n\geq r$, the Tur\'an graph, denoted by $T_{n,r}$, is the complete $r$-partite graph on $n$ vertices with the size of each part differing by at most one.
Two fundamental theorems due to Mantel~\cite{M07} and Tur\'an~\cite{T41} state that $\ex(n, K_{r+1})=e(T_{n,r})$ for $r\ge 2$ and the Tur\'an graph is the unique extremal graph. The celebrated Erd\H{o}s-Stone-Simonovits Theorem~\cite{ES66,ES46} told us that asymptotically Tur\'an's construction is best possible for any graph $H$ with $\chi(H)\ge 3$.
But the Tur\'an problem are often very difficult for bipartite graphs even for the cycles and trees.  In this article, we mainly concern a special family of graphs obtained from  blowing up all edges in trees.   Formally, given a graph $F$ and a positive integer $p$, the {\it edge blow-up} of $F$, denoted by $F^{p+1}$, is  the  graph obtained from replacing each edge in $F$ by a clique of size $p+1$ where the new added vertices of the cliques are all different.

The extremal graphs of the edge blow-up of trees have special constructions. To describe the constructions, we need some definitions and notation.
Given two vertex-disjoint graphs $H_1$ and $H_2$, the {\it join graph} of $H_1$ and $H_2$, denote by $H_1\vee{H_2}$, is  the graph obtained by joining each vertex of $H_1$ to each vertex of $H_2$. Given vertex-disjoint graphs $H_1, \ldots, H_k$, we can define the join graph of $H_1, \ldots, H_k$ recursively, i.e. $H_1\vee \ldots\vee H_k=(H_1\vee \ldots\vee H_{k-1})\vee H_k$. Write $\vee_{i=1}^k H_i$ for $H_1\vee\ldots\vee H_k$ and $\cup_{i=1}^k H_i$ for the disjoint union of $H_1, \ldots, H_k$. If $H_1=\cdots=H_k=H$, write $kH$ for $\cup_{i=1}^k H_i$.
A graph is called {\it almost $d$-regular} if all vertices, except at most one of degree $d-1$, have degree $d$.
Write $R(n,d)$ for an almost $d$-regular graph on $n$ vertices.
Denote by $K(n_1, \ldots, n_p)$ the  complete $p$-partite graph with color classes of orders $n_1, \ldots, n_p$.
As usual, write $K_t$ and $K_{s,t}$ for $K(t)$  and $K(s,t)$ respectively. Write  $K_t^c$ for the empty graph on $t$ vertices.
Let $L_1(n_1, \ldots, n_p; k)$ be the family of graphs constructed by embedding a $R(2k-1, k-1)$ in one class of $K(n_1, \ldots, n_p)$ and
let $L_2(n_1, \ldots, n_p; k)$ be the family of graphs constructed by embedding  a $R(k+1,k-1)\cup K_{k-1}$ (for even $k$) or $2K_k$ (for odd $k$) in one class of $K(n_1, \ldots, n_p)$.
If $n_1, \ldots, n_p$ have almost equal size, i.e. $|n_i-n_j|\le 1$ for $1\le i,j\le p$, we write $K(n;p)$ and  $L_s(n; p;k)$ for $K(n_1, \ldots, n_p)$ and  $L_s(n_1,\ldots, n_p;k)$, respectively, where $n=\sum_{i=1}^p n_i$ and $s\in\{1,2\}$.

Now we define the extremal graphs for the blow-up of trees.
For positive integers $n, p, a, k$ with $n\ge a$, let
$$\mathcal{H}_i(n,p,a,k)=K_{a-1}\vee L_i(n_1,\ldots, n_p;k), \mbox{ for $i=1,2$},$$
and
$$\mathcal{H}_2(n,p,a,d,k)=R(a-1, d)\vee L_2(n_1,\ldots, n_p;k),$$
where $\sum\limits_{i=1}^pn_i=n-a+1$.
In particular, let
$${H}_i(n,p,a,k)=K_{a-1}\vee L_i(n-a+1;p;k), \mbox{ for $i=1,2$},$$
and
$${H}_2(n,p,a,d,k)=R(a-1, d)\vee L_2(n-a+1;p;k).$$
Note that $H_i(n,p,a,k)$ ($i=1,2$) and $H_2(n,p,a, d,k)$ are also families of graphs and by the definitions they might contain non-isomorphic graphs. However, as mentioned in~\cite{L13},   all members in each family have the same number of edges, and since their difference does not matter in this article, we will treat each of them as a "unique" graph instead of a family of graphs.

Define
$$g_1(k)=\left\{
                    \begin{array}{ll}
                    k^2-\frac 32k, & \hbox{$k$ is even;} \\
                    k^2-\frac{3k-1}2, & \hbox{$k$ is odd,}
                    \end{array}
                  \right.
\mbox{ and }
g_2(k)=\left\{
                    \begin{array}{ll}
                    k^2-\frac 32k, & \hbox{$k$ is even;} \\
                    k^2-k, & \hbox{$k$ is odd.}
                    \end{array}
                  \right.
$$
Let
$$g(n,p,a)=e(T_{n-a+1,p})+e(K_{a-1,n-a+1})+e(K_{a-1}),$$
and
$$g(n,p,a,b-1)=e(T_{n-a+1,p})+e(K_{a-1,n-a+1})+e(R(a-1,b-1)).$$

The Tur\'an problem for the edge blow-up of trees was originally studied by Erd\H{o}s et al.~\cite{Erdos-95}, they determined the value of $ex(n, F^{3})$ and the extremal graphs of $F^3$ when $F$ is a star. Here is a list of  some of known Tur\'an type results about the edge blow-up of trees.
\begin{itemize}
\item[(a).]
(Erd\H{o}s et al~\cite{Erdos-95}, 1995)
For $k\geq1$ and $n\geq 50k^2$, $$\ex(n, K_{1,k}^3)=g(n,2,1)+g_2(k).$$
Moreover, when $k$ is odd,  $H_2(n,2,1,k)$ is the unique extremal graph; when $k$ is even, $H_1(n,2,1,k)$ and $H_2(n,2,1,k)$  are extremal graphs.
\item[(b).]
(Chen et al.~\cite{Wei}, 2003) For any $p\ge 2$, $k\ge 1$ and  $n\ge 16k^3(p+1)^8$,
 $$\ex(n, K_{1,k}^{p+1})=g(n,p,1)+g_2(k),$$
where $H_2(n,p,1,k)$ is the unique extremal graph for odd $k$, and when $k$ is even, $H_1(n,p,1,k)$ and $H_2(n,p,1,k)$ are extremal graphs.

\item[(c).]
(Glebov~\cite{Glebov-2011}, 2011)
For any $p\ge 2$, $k\ge 1$ and $n>16r^{11}(p+1)^8$,
$$\ex(n, P_{r+1}^{p+1})=g(n,p,a)+g_2(k),$$
where $a=\left\lfloor\frac{r+1}2\right\rfloor$ and $k=1$ or $2$ with respect to $r$ is odd or even.
Moreover, $H_2(n,p,a,1)$ (resp. $H_1(n,p,a,2)$) is the unique extremal graph when $r$ is odd (when $r$ is even, resp.).

\item[(d)](Liu~\cite{L13}, 2013)\label{Liu13}
Given a tree $T$, denote by $A$ and $B$ its two color classes with $a=|A|\le |B|$.  For any $p\ge 3$, when $n$ is sufficiently large, we have that
\begin{enumerate}
  \item[(i)]
if $\delta(A)=1$ and $\alpha(T)=|B|$, then $\ex(n, T^{p+1})=g(n,p,a)+g_2(1)$. Moreover, $H_2(n,p,a,1)$ is the unique extremal graph for $T^{p+1}$.
  \item[(ii)]
if $\delta(A)=2$, then $\ex(n, T^{p+1})=g(n,p,a)+g_2(2)$. Moreover, $H_1(n,p,a, 2)$ is the unique extremal graph for $T^{p+1}$.
\end{enumerate}

\item[(e)]
There are some other related results: (i)  Liu~\cite{L13} also determined the Tur\'an number for the edge blow-up of cycles and its extremal graphs, which are almost the same as those for the blow-up of paths. (ii) Another interesting extension of the result (a) is blowing up every edge of a tree by a cycle of odd length instead of a clique. Hou et al~\cite{Ex for Ckq, Hou18} solved the problem for stars and recently Zhu et al~\cite{Zh20} resolved the problem for paths and cycles.
\end{itemize}

In this article, we extend Liu's result to trees $T$ such that the two color classes $A, B$ have the property that $|A|\le |B|$ and $\delta(A)\ge 2$.  The main result is as follows.

\begin{thm}\label{THM: Main}
Given $p\ge 3$ and a tree $T$ such that its two color classes $A$ and $B$ satisfying $|A|\le{|B|}$.  Let $A_0=\{ x\in A : d(x)=\delta(A)\}$ and $B_0=\{ y\in B : |N(y)\cap A_0|\ge 2\}$. Denote by $a=|A|$, $k=\delta(A)$ and $b+2=\delta(B_0)$.  If $k\ge 2$ then,  when $n$ is sufficiently large, we have
$$\ex(n, T^{p+1})=\left\{
                    \begin{array}{ll}
                     g(n,p,a)+g_1(k), & \hbox{$k$ is even;}\\
                     g(n,p,a)+g_2(k), & \hbox{$k$ is odd and $B_0=\emptyset$;} \\
                     g(n,p,a)+g_1(k), & \hbox{$k$ is odd and $b=0$ or $0<b\le a-1-\lceil{\frac{k-1}{a-1}\rceil}$;}\\
                     g(n,p,a,b-1)+g_2(k), & \hbox{$k$ is odd and $b\ge \max\{1,a-1-\lceil{\frac{k-1}{a-1}\rceil}\}$.}
                    \end{array}
                  \right.
$$
Furthermore, for even $k$, ${H}_1(n,p,a,k)$ and ${H}_2(n,p,a,k)$ are extremal graphs; for odd $k$, if $B_0=\emptyset$ then ${H}_2(n,p,a,k)$ is the unique extremal graph,
if $B_0\not=\emptyset$ and $b=0$ or $0<b<a-1-\lceil{\frac{k-1}{a-1}\rceil}$ then ${H}_1(n,p,a,k)$ is the unique extremal graph, if $B_0\not=\emptyset$ and  $b>\max\{0, a-1-\lceil{\frac{k-1}{a-1}\rceil}\}$ then ${H}_2(n,p,a,b-1,k)$ is the unique extremal graph, and if $B_0\not=\emptyset$ and $b=a-1-\lceil{\frac{k-1}{a-1}\rceil}>0$ then ${H}_1(n,p,a,k)$ and ${H}_2(n,p,a,b-1,k)$ are extremal graphs.
\end{thm}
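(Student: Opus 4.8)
The plan is to follow the now-standard two-sided strategy for edge-blow-up Turán problems, as in Liu~\cite{L13} and Glebov~\cite{Glebov-2011}, but to handle carefully the new phenomenon that arises when $\delta(A)=k\ge 2$: the $R(2k-1,k-1)$-type gadget in one part of the Turán graph must itself avoid the ``small'' structures forced by $A_0$ and $B_0$, and the optimal gadget bifurcates according to the parity of $k$ and the size of $b$. First I would establish the lower bound by exhibiting the claimed extremal graphs and checking they are $T^{p+1}$-free. For each of the four cases one takes the complete $p$-partite graph $T_{n-a+1,p}$, joins a clique $K_{a-1}$ (or an almost-$(b-1)$-regular graph $R(a-1,b-1)$ in the last case) to it, and embeds into one part the appropriate almost-regular gadget $L_1$ or $L_2$. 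The $T^{p+1}$-freeness reduces, as usual, to the observation that any copy of $T^{p+1}$ would force a vertex of $A$ together with all its blown-up cliques to live in a bounded region, and a short degree/matching computation in the gadget (this is exactly where $g_1(k)$ versus $g_2(k)$ and the threshold $a-1-\lceil\frac{k-1}{a-1}\rceil$ come from) shows no such copy exists; this mirrors the $F^3$ calculations of~\cite{Erdos-95} but with $k=\delta(A)$ in place of the star degree.

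For the upper bound, let $G$ be a $T^{p+1}$-free graph on $n$ vertices with $e(G)=\ex(n,T^{p+1})$. Since $\chi(T^{p+1})=p+1$ (blowing up a single edge of the bipartite $T$ by $K_{p+1}$ already needs $p+1$ colours, and $T$ bipartite keeps it at $p+1$), the Erdős--Stone--Simonovits theorem gives $e(G)=(1-\frac1p)\binom n2+o(n^2)$, and the stability method (Füredi-type stability, or the progressive-induction framework Liu uses) shows $G$ is, after deleting $o(n^2)$ edges, close to $T_{n,p}$. The standard cleaning lemmas then let me assume $G$ contains a ``robust'' complete $p$-partite skeleton on all but $a-1$ vertices, with the exceptional set $W$ of size $a-1$ dominating almost everything; this reduces the problem to bounding $e(G[W])+e_G(W,\text{one part})+\max$ over admissible gadgets inside a single part. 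The crux is then a local extremal problem: among all graphs $H$ that can sit inside one part of the quasi-$T_{n,p}$ without creating $T^{p+1}$, which maximises the number of edges? This is precisely the content of the auxiliary $\{K_{1,k},kK_2,2K_{1,k-1}\}$-free extremal result advertised in the abstract (extending Abbott--Hanson--Sauer~\cite{?}), and I would isolate and prove that statement first as a self-contained lemma.

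The hard part, and where most of the work lies, is the interplay in the odd-$k$ case between the gadget inside the part and the graph $R(a-1,b-1)$ on the dominating set $W$: there is a genuine trade-off, because making $W$ denser (larger $b$) forbids a larger gadget, and the break-even point is exactly $b=a-1-\lceil\frac{k-1}{a-1}\rceil$. Pinning this down requires a careful case analysis showing that $T^{p+1}\subseteq G$ whenever \emph{both} $e(G[W])$ is large \emph{and} the gadget is the richer ($L_2$) one — i.e. one must exhibit the embedding of $T^{p+1}$ using $a-1$ vertices of $W$ as the ``centres'' of the $a-1$ non-$A_0$ vertices of $A$ plus the $A_0$-vertices mapped into the gadget, with the $B_0$-structure dictating how many edges among $A_0$-images are available. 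Establishing that this embedding always succeeds outside the stated regime, and that the counts in the four cases are tight, is the main obstacle; everything else (stability, cleaning, the global optimisation over the sizes $n_1,\dots,n_p$ of the parts, and the uniqueness statements, which follow by tracing equality through the cleaning steps and the local extremal lemma) is routine once this local picture is secured.
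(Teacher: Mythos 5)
Your overall architecture (explicit constructions for the lower bound, a structural reduction for the upper bound, the auxiliary $\{K_{1,k},kK_2,2K_{1,k-1}\}$-free extremal lemma, and the trade-off between $e(G[W])$ and the gadget at the threshold $b=a-1-\lceil\frac{k-1}{a-1}\rceil$) matches the paper's, and you correctly identify the two genuinely new ingredients. But as written the proposal has gaps at the points where the real work happens. First, for the upper bound you invoke Erd\H{o}s--Stone--Simonovits plus ``Füredi-type stability'' and ``standard cleaning lemmas'' to reach a graph of the form $W\vee(\text{quasi-}T_{n-a+1,p})$ with $|W|=a-1$ and $W$ dominating everything. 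Generic stability only gives $o(n^2)$ edit distance from $T_{n,p}$ and does not by itself produce this exact structure; the paper instead verifies that $T^{p+1}\subseteq P_\ell\vee K((p-1)\ell;p-1)$ and $T^{p+1}\subseteq \ell P_2\vee K(2(p-1)\ell;p-1)$ (Lemma~\ref{LEM: T^{p+1}}) so as to apply Simonovits' symmetrization theorems (Theorems~\ref{thm2} and~\ref{thm3}), which deliver an extremal graph in $\mathcal{D}(n,p,r)$ whose non-exceptional part is exactly a Tur\'an graph and whose exceptional vertices have the all-or-nothing adjacency to the classes. You mention progressive induction parenthetically but do not commit to it; without that machinery the claim $|W|=a-1$ and the subsequent bookkeeping do not follow. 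Second, you assert the problem reduces to a gadget ``inside a single part,'' but a priori the leftover edges could be spread over all $p$ parts $C_1,\dots,C_p$. The paper needs the inequality $d_{C_i}(u)+\sum_{j\neq i}\nu(C_j)\le k-1$ (Claim~\ref{CL:Delta+nu}) combined with the Chv\'atal--Hanson bound $f(\nu,\Delta)$ (Theorem~\ref{LEM: CH}) to show that in the extremal configuration $\max_i\Delta(C_i)=k-1$ forces $e(C_j)=0$ for all $j\ge2$; this step is absent from your plan.

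Two further points. The lower bound is not a ``short degree/matching computation'': verifying that $\mathcal{H}_1$, $\mathcal{H}_2$ and $\mathcal{H}_2(n,p,a,b-1,k)$ are $T^{p+1}$-free goes through Liu's decomposition family, i.e.\ the identity $\mathcal{M}(T^{p+1})=\mathcal{H}(T)$ (the splitting family of $T$), reducing to showing $K_{a-1}\vee C_1$ or $R(a-1,b-1)\vee C_1$ contains no split of $T$; the case analysis there (in particular the case $|B'|=1$, where $\delta(B_0)=b+2$ and the two disjoint $K_{1,k-1}$'s enter) is exactly where the four regimes of the theorem are generated, and your sketch does not engage with it. Finally, for the odd-$k$ trade-off your intended embedding is in the right spirit but slightly off: the paper embeds one vertex $u_0\in B_0$ (not $a-1$ vertices of $A$) at the centre of a $K_{1,b}$ in $G[W]$, places its two $A_0$-neighbours at the centres of two disjoint copies of $K_{1,k-1}$ in $C_1$, and maps $A\setminus\{u_1,u_2\}$ onto $W\setminus\{u_0\}$; this dichotomy (``either $C_1$ is $2K_{1,k-1}$-free or $G[W]$ is $K_{1,b}$-free'') is what makes Theorem~\ref{LEM: 2K_{1,k-1}} applicable and yields the break-even value of $b$. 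Until the symmetrization step, the single-part concentration, and the splitting-family verification are supplied, the proposal is a correct outline rather than a proof.
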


\noindent{\bf Remarks:} 
(1) For even $k$, $H_2(n,p,a,k)$ is a new extremal graph which is not mentioned  in~\cite{Wei,Erdos-95}.

(2) Combining the result given by Liu~\cite{L13},
the Tur\'an problem for the edge blow-up of trees $T$, except for the case $\delta(A)=1$ and $\alpha(T)\not=|B|$, is resolved.

The results of another type of extremal problems will be used in the proof of Theorem~\ref{THM: Main}.
Abbott, Hanson, and Sauer~\cite{AHS72} determined the maximum number of edges in a graph with maximum degree and matching number no more than $k$.
\begin{thm}[\cite{AHS72}]\label{LEM: AHS}
Let $G$ be a $\{K_{1,k}, kK_2\}$-free graph. Then
$$e(G)\le g_2(k)=\begin{cases}
	k^2-k & \text{if } k \text{ is odd},\\
	k^2-\frac{3}{2}k & \text{if } k \text{ is even}.
	\end{cases}$$
Moreover, the equality holds if $G=2K_k$ when $k$ is odd, and $G=R(2k-1-2t, k-1)\cup tK_{1,k-1}$ for $0\le t\le \frac{k}{2}-1$ or $R(k+1, k-1)\cup K_{k-1}$ when $k$ is even.

\end{thm}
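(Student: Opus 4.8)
The plan is to reformulate the forbidden subgraphs as degree and matching constraints and then treat the extremal problem as an optimization over connected components. Being $K_{1,k}$-free is equivalent to $\Delta(G)\le k-1$, and being $kK_2$-free is equivalent to $\nu(G)\le k-1$; both properties pass to components, with $\Delta$ the maximum over components and $\nu$ additive over them. Hence, after discarding isolated vertices and letting $G_1,\dots,G_s$ be the components of $G$, we have $e(G)=\sum_i e(G_i)$ with $\Delta(G_i)\le k-1$ for every $i$ and $\sum_i\nu(G_i)\le k-1$. Setting $\psi(j)=\max\{e(H):H\text{ connected},\ \Delta(H)\le k-1,\ \nu(H)=j\}$ and $\psi(0)=0$, it then suffices to prove $\sum_i\psi(j_i)\le g_2(k)$ for all $j_i\ge 0$ with $\sum_i j_i\le k-1$, and afterwards to exhibit configurations attaining equality.

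The core is a sharp bound on $\psi(j)$. For a connected candidate $H$ with $\nu(H)=j$, fix a maximum matching $M$, put $U=V(M)$ (so $|U|=2j$) and $I=V(H)\setminus U$; since $M$ is maximal, $I$ is independent, and counting edges incident to $U$ gives $e(H)=\sum_{v\in U}\deg_H(v)-e(H[U])\le 2j(k-1)-e(H[U])$. The trivial bound $e(H[U])\ge|M|=j$ is far too weak; the real input is that $M$ is a \emph{maximum} matching, so $H$ has no $M$-augmenting path. This severely restricts the edges leaving $U$: for each matching edge $x_iy_i$, either at most one of $x_i,y_i$ has a neighbour in $I$, or both do and $N_I(x_i)=N_I(y_i)$ consists of a single vertex; following alternating paths through $U$ restricts things further, producing in effect the Gallai--Edmonds structure of $H$, now constrained by $\Delta(H)\le k-1$. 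Plugging this into the edge count -- this is, in essence, the Erd\H{o}s--Gallai extremal analysis for matchings sharpened by the degree cap -- and balancing the two bounds available on a factor-critical piece $D_i$ (namely $e\le\binom{|D_i|}{2}$ for small $|D_i|$ against $e\le|D_i|(k-1)/2$ for large $|D_i|$, which cross over at $|D_i|\approx k$) yields $\psi(j)\le jk$ for odd $k$, with equality only at $j=\frac{k-1}{2}$ and $H=K_k$. This settles the odd case at once: $\sum_i\psi(j_i)\le k\sum_i j_i\le k(k-1)=g_2(k)$, and equality forces exactly two blocks, each $K_k$. For even $k$ one needs the exact value of $\psi(j)$: the efficiency $\psi(j)/j$ never reaches $k$ (a single $K_k$ would have $\nu=\frac k2$, and two of them $\nu=k>k-1$), so the optimum moves to spending the entire budget $k-1$ on one almost $(k-1)$-regular block on roughly $2k$ vertices.

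Finally I would verify equality by direct computation. For odd $k$, $K_k$ is $(k-1)$-regular with $\nu(K_k)=\frac{k-1}{2}$, so $2K_k$ has $\nu=k-1$ and $e=k(k-1)=g_2(k)$. For even $k$, using $e(R(n,k-1))=\lfloor n(k-1)/2\rfloor$, $\nu(R(n,k-1))\le\lfloor n/2\rfloor$, together with $e(K_{1,k-1})=k-1$, $\nu(K_{1,k-1})=1$, $e(K_{k-1})=\binom{k-1}{2}$ and $\nu(K_{k-1})=\frac{k-2}{2}$, one checks that $R(2k-1-2t,k-1)\cup tK_{1,k-1}$ (for $0\le t\le \frac k2-1$) and $R(k+1,k-1)\cup K_{k-1}$ are $\{K_{1,k},kK_2\}$-free with exactly $g_2(k)=k^2-\frac32 k$ edges. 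The main obstacle is everything in the second paragraph: proving the sharp upper bound on $\psi(j)$ for every $j$ and both parities of $k$ -- equivalently, the statement that a connected graph with matching number $j$ and maximum degree at most $k-1$ has at most $\psi(j)$ edges -- and then checking that the induced component-packing optimum equals $g_2(k)$ and is attained precisely by the listed families. The parity of $k$ is the genuinely delicate feature: it flips the extremal configuration from a disjoint union of two equal cliques (odd $k$) to a single near-regular block, possibly enlarged by a few stars or one extra smaller clique (even $k$).
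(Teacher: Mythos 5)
The paper gives no proof of this statement---it is quoted from \cite{AHS72}---so your attempt can only be judged on its own terms, and as it stands it has a genuine gap that you yourself flag: the entire argument rests on a sharp upper bound for $\psi(j)$, the maximum number of edges of a \emph{connected} graph with $\Delta\le k-1$ and matching number $j$, and this bound is asserted (``plugging the Gallai--Edmonds structure into the edge count \dots yields $\psi(j)\le jk$'') rather than proved. The no-augmenting-path observations you list do not by themselves deliver the inequality; one still has to control the edges inside $V(M)$ and between $V(M)$ and the independent remainder, and to carry out the ``crossover'' comparison between $\binom{|D_i|}{2}$ and $|D_i|(k-1)/2$ on the factor-critical pieces. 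Without that lemma neither parity case is established. The even case is in worse shape: you never state the claimed value of $\psi(j)$ for even $k$, and the heuristic ``the optimum spends the whole budget $k-1$ on one almost $(k-1)$-regular block'' is inconsistent with the extremal graphs in the statement itself, e.g.\ $R(2k-1-2t,k-1)\cup tK_{1,k-1}$ and $R(k+1,k-1)\cup K_{k-1}$, which distribute the matching budget over several nontrivial components; so the component-packing optimization you reduce to is not actually solved. The reduction to components and the verification that the listed graphs attain $g_2(k)$ are correct, but that is the easy half.

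Two remarks that would repair or shorten the argument. First, the upper bound is exactly the case $\nu=\Delta=k-1$ of the Chv\'atal--Hanson bound (Theorem~\ref{LEM: CH} in this paper): $f(k-1,k-1)=(k-1)^2+\lfloor\frac{k-1}{2}\rfloor\bigl\lfloor\frac{k-1}{\lceil (k-1)/2\rceil}\bigr\rfloor$ equals $k^2-k$ for odd $k$ and $k^2-\frac{3}{2}k$ for even $k$, which is how the present paper itself uses the bound in~(\ref{EQ: e4}); your odd-case inequality $\psi(j)\le jk$ is likewise an immediate consequence of that theorem, no connectivity or Gallai--Edmonds analysis needed. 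Of course, invoking Chv\'atal--Hanson only relocates the difficulty unless you prove or cite it, but it shows the per-component optimization you set up is heavier machinery than the statement requires. Second, if you do want to argue structurally, the paper's own proof of Theorem~\ref{LEM: 2K_{1,k-1}} is a good template: it applies Gallai--Edmonds to the whole graph, parametrizes by the component \emph{orders} rather than their matching numbers, and proves a clean superadditivity claim $f(x_1)+f(x_2)\le f(x_1+x_2-1)$ to merge components; an analogous claim is what your sketch is missing.
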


We further determine the maximum number of edges in $\{K_{1,k}, kK_2, 2K_{1,k-1}\}$-free graphs and its extremal graphs in the following theorem, which has its own flavor in extremal graph theory and will be used in the proof of Theorem~\ref{THM: Main}.
\begin{thm}\label{LEM: 2K_{1,k-1}}
Let $G$ be a $\{K_{1,k}, kK_2, 2K_{1,k-1}\}$-free graph.		
Then $$e(G)\le g_1(k)=\begin{cases}
	k^2-\frac{3}{2}k+\frac{1}{2} & \text{if } k \text{ is odd},\\
	k^2-\frac{3}{2}k & \text{if } k \text{ is even}.
	\end{cases}$$
Furthermore, the equality holds if and only if $G=R(2k-1, k-1)$ when $k$ is odd, and $G=R(2k-1, k-1)$ or $R(k+1, k-1)\cup K_{k-1}$ when $k$ is even. If we do not care the the trivial components, the optimal graphs are determined completely.
\end{thm}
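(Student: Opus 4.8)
\noindent The plan is to split on the parity of $k$; throughout we may assume $G$ has no isolated vertices, since deleting them changes neither $e(G)$ nor the property of being $\{K_{1,k},kK_2,2K_{1,k-1}\}$-free.

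For even $k$ we have $g_1(k)=g_2(k)$, so the inequality $e(G)\le g_1(k)$ is precisely Theorem~\ref{LEM: AHS}, and only the extremal characterisation needs work: one must delete from the list of extremal graphs in Theorem~\ref{LEM: AHS} exactly those that contain $2K_{1,k-1}$. In $R(2k-1-2t,k-1)\cup tK_{1,k-1}$ with $t\ge 1$, the part $R(2k-1-2t,k-1)$ has at least $k$ vertices (since $t\le \frac k2-1$) and hence already contains a copy of $K_{1,k-1}$, which is vertex-disjoint from one of the $t\ge 1$ explicit copies; so these graphs contain $2K_{1,k-1}$ and drop out. What remains, namely $R(2k-1,k-1)$ (which has only $2k-1<2k$ vertices) and $R(k+1,k-1)\cup K_{k-1}$ (whose only component capable of hosting a $K_{1,k-1}$ is $R(k+1,k-1)$, because $\Delta(K_{k-1})=k-2$, and that component again has fewer than $2k$ vertices), is $2K_{1,k-1}$-free; these are therefore the extremal graphs for even $k$.

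For odd $k$ I would prove the bound in contrapositive form: a $\{K_{1,k},kK_2\}$-free graph $G$ without isolated vertices and with $e(G)\ge g_1(k)+1$ must contain $2K_{1,k-1}$. The starting point is the observation that, because $\Delta(G)\le k-1$, a copy of $2K_{1,k-1}$ lies inside $G$ if and only if there are two distinct vertices $u,v$ with $\deg(u)=\deg(v)=k-1$ and $N[u]\cap N[v]=\emptyset$; in particular, if two distinct components of $G$ each carry a vertex of degree $k-1$ we are already done. Next, if $\Delta(G)\le k-2$ then $G$ is $\{K_{1,k-1},kK_2\}$-free, and a short argument (Theorem~\ref{LEM: AHS} applied with parameter $k-1$ when $\nu(G)\le k-2$, and a Chv\'atal--Hanson-type estimate, or deletion of a suitable vertex, when $\nu(G)=k-1$) gives $e(G)<g_1(k)$, a contradiction. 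So $\Delta(G)=k-1$, and we may assume exactly one component $G_1$ carries a vertex of degree $k-1$ while every other component has maximum degree at most $k-2$; write $G=G_1\cup G'$, so $\Delta(G')\le k-2$ and $\nu(G_1)+\nu(G')\le k-1$. It then remains to analyse the dense component $G_1$: fix a maximum matching $M$ of $G_1$, set $m=\nu(G_1)$, so $C:=V(M)$ is a vertex cover of $G_1$ with $|C|=2m$ and $I:=V(G_1)\setminus C$ is independent with all edges running into $C$; and record that $2K_{1,k-1}$-freeness makes the closed neighbourhoods of the degree-$(k-1)$ vertices of $G_1$ pairwise intersecting, so the degree-$(k-1)$ vertices lying in $I$ have pairwise intersecting neighbourhoods inside the $2m$-set $C$. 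Feeding these facts, together with $\Delta(G_1)\le k-1$, $m\le k-1-\nu(G')$, and a clean bound on $e(G')$ in terms of $\nu(G')$ (again via Theorem~\ref{LEM: AHS} with parameter $k-1$), into $e(G)=e(G_1[C])+e(C,I)+e(G')$, the goal is to reach $e(G)\le g_1(k)$ with equality forcing $\nu(G')=0$, $m=k-1$, $I=\emptyset$ and $G_1$ to be $(k-1)$-regular on $2m=2k-2<2k-1$ vertices, which is impossible; hence in the equality-violating regime $G_1$ already contains two degree-$(k-1)$ vertices with disjoint closed neighbourhoods, i.e. $2K_{1,k-1}\subseteq G_1$. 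Running the count backwards isolates the extremal graphs: for odd $k$ one is forced to have $G'$ empty and $G_1$ a $(k-1)$-regular graph on exactly $2k-1$ vertices, namely $R(2k-1,k-1)$ (the competitor $R(k+1,k-1)\cup K_{k-1}$ is killed because both components have perfect matchings, so it contains $kK_2$), while for even $k$ the looser parities keep $R(k+1,k-1)\cup K_{k-1}$ alive.

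I expect the genuine difficulty to be precisely this last edge count. The three natural bounds---$e(G_1[C])\le m(k-1)$, $e(C,I)=\sum_{v\in I}\deg(v)$, and the estimate on $e(G')$---are each far too generous in isolation and must be played off against one another: a vertex of $I$ of degree $k-1$ severely constrains $G_1[C]$ through the intersecting-neighbourhood condition, vertices of $C$ adjacent into $I$ spend their degree budget there rather than inside $C$, and the matching structure of $C$ further limits $e(G_1[C])$; making all the constants close simultaneously across the several regimes for $m$, $|I|$ and $\nu(G')$ is where the careful bookkeeping will live, and is also where the threshold $g_1(k)$, rather than the weaker $g_2(k)$, gets extracted.
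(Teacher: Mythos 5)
The even-$k$ half of your argument is reasonable in spirit but rests on the list of extremal graphs in Theorem~\ref{LEM: AHS} being exhaustive, whereas that theorem as stated only asserts that equality \emph{holds if} $G$ is one of the listed graphs; ruling out further extremal graphs still requires structural work, which is exactly why the paper runs its Gallai--Edmonds analysis for both parities instead of quoting Theorem~\ref{LEM: AHS} for even $k$. Your observations that $R(2k-1-2t,k-1)\cup tK_{1,k-1}$ contains $2K_{1,k-1}$ for $t\ge 1$ and that the two surviving graphs are $2K_{1,k-1}$-free are correct, but they only prune a list you have not shown to be complete.

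The more serious issue is the odd-$k$ case, which is where the content of the theorem lies (since $g_1(k)<g_2(k)$ only for odd $k$). There your argument is a plan, not a proof: after reducing to a single component $G_1$ carrying all degree-$(k-1)$ vertices and setting up the cover/independent-set decomposition $C\cup I$ of $G_1$, the decisive step --- showing that $e(G_1[C])$, $e(C,I)$ and $e(G')$ cannot simultaneously be large enough to push $e(G)$ above $g_1(k)$ --- is precisely what you defer, and you say so yourself (``this is where the careful bookkeeping will live''). Without that count the bound is not established. (Your sketch of the equality case is also internally inconsistent: you first say equality forces $I=\emptyset$ and $|V(G_1)|=2k-2$, yet the actual extremal graph $R(2k-1,k-1)$ has $2k-1$ vertices and leaves one vertex uncovered by every maximum matching.) For comparison, the paper sidesteps this bookkeeping: it takes an edge-maximal counterexample, observes $\nu(G)=k-1$, applies the Gallai--Edmonds structure theorem to get $S$ and the components of $G-S$, and then optimizes component sizes using the function $f$ (with $f(x)={x\choose 2}$ for $x\le k-1$ and $f(x)=\lfloor (k-1)x/2\rfloor$ for $x\ge k$) via the merging inequality $f(x_1)+f(x_2)\le f(x_1+x_2-1)$, which strictly increases the edge count while preserving all three forbidden-subgraph conditions; this pins the nontrivial part of $G-S$ down to $R(2k-1,k-1)$ or $R(k+1,k-1)\cup K_{k-1}$, and the case $S\ne\emptyset$ is killed by exhibiting a $2K_{1,k-1}$. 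You would need either to carry out your counting argument in full or to switch to a matching-structure argument of this kind.
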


Some other (including hypergraph) extensions of Theorem~\ref{LEM: AHS} have been made e.g., in~\cite{BK09,CH76,HYGL19,K14}, especially the following theorem given by Chv\'atal and Hanson~\cite{CH76} will be used in our proof.

\begin{thm}[\cite{CH76}]\label{LEM: CH}
	For all graph $G$ with maximum degree $\Delta\ge{1}$ and matching number $\nu\ge{1}$, then $e(G)\le{f(\nu,\Delta)}=\nu\Delta+\lfloor{\frac{\Delta}{2}}\rfloor\left\lfloor{\frac{\nu}{\lceil{\Delta/2}\rceil}}\right\rfloor$.
\end{thm}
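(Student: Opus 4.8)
The plan is to prove the equivalent statement that $e(G)\le f(\nu,\Delta)$ for every $G$ with $\Delta(G)\le\Delta$ and $\nu(G)=\nu$, by induction on $\nu$ with $\Delta$ fixed, after recording two elementary facts about $f$. Writing $m=\lceil\Delta/2\rceil$, I would first note the monotonicity $f(\nu,\Delta)-f(\nu-1,\Delta)=\Delta+\lfloor\Delta/2\rfloor(\lfloor\nu/m\rfloor-\lfloor(\nu-1)/m\rfloor)\ge\Delta$, since the floor is non-decreasing, and the superadditivity $f(\nu_1,\Delta)+f(\nu_2,\Delta)\le f(\nu_1+\nu_2,\Delta)$, which holds because $\nu\Delta$ is additive and $\lfloor\,\cdot\,/m\rfloor$ is superadditive. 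The base case $\nu=1$ is immediate, since such a $G$ is a star or a triangle together with isolated vertices, and a short case check (noting that a triangle forces $\Delta\ge2$) gives $e(G)\le f(1,\Delta)$.

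For the inductive step ($\nu\ge2$) I would split according to the Gallai--Edmonds structure, letting $D(G)$ be the set of vertices missed by at least one maximum matching. In the first case some vertex $v$ is covered by every maximum matching, equivalently $\nu(G-v)=\nu-1$; then deleting $v$ destroys at most $\Delta$ edges, so the induction hypothesis and the monotonicity above give $e(G)\le e(G-v)+\deg_G(v)\le f(\nu-1,\Delta)+\Delta\le f(\nu,\Delta)$. In the remaining case no vertex is covered by every maximum matching, i.e. $D(G)=V(G)$, and the Gallai--Edmonds structure theorem then forces every connected component $H$ of $G$ to be factor-critical; in particular each such $H$ has odd order $n_H$ and a near-perfect matching, so $n_H=2\nu(H)+1$.

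I would then bound each component $H$ using only its own maximum degree $\Delta_H\le\Delta$. The degree-sum estimate gives $e(H)\le\lfloor n_H\Delta_H/2\rfloor=\nu(H)\Delta_H+\lfloor\Delta_H/2\rfloor$. The crucial input is a lower bound on $\nu(H)$: a vertex of degree $\Delta_H$ forces $n_H\ge\Delta_H+1$, and since $n_H$ is odd a parity check upgrades this to $n_H\ge\Delta_H+2$ when $\Delta_H$ is odd, so in all cases $\nu(H)=(n_H-1)/2\ge\lceil\Delta_H/2\rceil$. Comparing with $f(\nu(H),\Delta)=\nu(H)\Delta+\lfloor\Delta/2\rfloor\lfloor\nu(H)/m\rfloor$ reduces the bound $e(H)\le f(\nu(H),\Delta)$ to the inequality $\lfloor\Delta_H/2\rfloor\le\nu(H)(\Delta-\Delta_H)+\lfloor\Delta/2\rfloor\lfloor\nu(H)/m\rfloor$, which I would verify by cases: if $\Delta_H=\Delta$ then $\nu(H)\ge m$ makes $\lfloor\nu(H)/m\rfloor\ge1$ and the last term is already $\lfloor\Delta/2\rfloor$, while if $\Delta_H<\Delta$ then $\nu(H)(\Delta-\Delta_H)\ge\nu(H)\ge\lceil\Delta_H/2\rceil\ge\lfloor\Delta_H/2\rfloor$. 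Summing over all components (the trivial components $K_1$ contributing nothing) and applying superadditivity yields $e(G)=\sum_H e(H)\le\sum_H f(\nu(H),\Delta)\le f\!\left(\sum_H\nu(H),\Delta\right)=f(\nu,\Delta)$.

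The routine inputs are the two monotonicity facts and the base case; the heart of the proof is the second case. I expect the main obstacle to be twofold: invoking the Gallai--Edmonds decomposition to guarantee that each component is factor-critical (so that $n_H=2\nu(H)+1$), and the parity argument yielding $\nu(H)\ge\lceil\Delta_H/2\rceil$, which is precisely what allows the surplus term $\lfloor\Delta_H/2\rfloor$ to be absorbed into $f(\nu(H),\Delta)$. The final displayed inequality, handled separately for $\Delta_H=\Delta$ and $\Delta_H<\Delta$, is the only place where genuine care is needed.
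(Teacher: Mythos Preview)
The paper does not give a proof of this statement at all: Theorem~\ref{LEM: CH} is quoted from Chv\'atal and Hanson~\cite{CH76} without proof and used only as a black box (in Claim~4.3 and in inequality~(\ref{EQ: e4})). So there is no ``paper's own proof'' to compare against.

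That said, your proposed argument is correct and is in the spirit of the original Chv\'atal--Hanson proof. The monotonicity and superadditivity of $f$ are straightforward, the base case $\nu=1$ is indeed a star or a triangle plus isolated vertices, and the equivalence ``$v$ is covered by every maximum matching $\Leftrightarrow \nu(G-v)=\nu-1$'' is standard. In Case~2 the Gallai--Edmonds theorem is applied correctly: if $D(G)=V(G)$ then every component is factor-critical, hence of odd order $2\nu(H)+1$, and your parity upgrade to $\nu(H)\ge\lceil\Delta_H/2\rceil$ is exactly what is needed. The final inequality $\lfloor\Delta_H/2\rfloor\le\nu(H)(\Delta-\Delta_H)+\lfloor\Delta/2\rfloor\lfloor\nu(H)/m\rfloor$ is verified correctly in both sub-cases, and the additivity of $\nu$ over components together with superadditivity of $f$ finishes the argument. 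One small remark: you should state explicitly at the outset that you are proving the bound for all $G$ with $\Delta(G)\le\Delta$ and $\nu(G)=\nu$ (rather than $\Delta(G)=\Delta$), since in Case~1 the deletion $G-v$ may have strictly smaller maximum degree and you still want to apply the induction hypothesis with the same parameter~$\Delta$; you do say this, but it is worth flagging as the reason for the reformulation.
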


The rest of the article is arranged as follows. We give preliminaries in the next section and prove Theorem~\ref{LEM: 2K_{1,k-1}} in Section 3. The proof of Theorem~\ref{THM: Main} will be given in Section 4. We give some discussions in the last section.
\section{Preliminaries}

A graph $G$ on $n$ vertices is called an $n$-vertex graph. Write $H\subseteq G$ for $H$ being a subgraph of $G$.
The following family of graphs was first introduced by Simonovits~\cite{M74}.

\newtheorem{def1}{\bf Definition}[section]
\begin{def1}
Let $\mathcal{D}(n,p,r)$ be the family of $n$-vertex graphs $G$ satisfying the following symmetric conditions:
	\begin{enumerate}
		\item[(1)] It is possible to omit at most $r$ vertices of $G$ such that the remaining graph $G{'}$ is a join of graphs of almost equal order, i.e. $G{'}=\vee_{i=1}^{p}G^i$ where $|V(G^i)|=n_i$ and $|n_i-n/p|\le{r}$ for any $i\in [p]$.
The vertices in $V(G)\setminus V(G{'})$ are called the {\it exceptional vertices}.

		\item[(2)] For every $i\in[p]$, there exit connected graphs $H_i$ such that $G^i={k_i}H_i$ where $k_i=n_i/{|H_i|}$ and any two copies $H_i^j$, $H_i^\ell$ in $G^i$ ($1\le{j}<{\ell}\le{k_i}$) are symmetric subgraphs of $G^i$, i.e. there exists an isomorphism $\omega: H_i^j\longrightarrow{H_i^\ell}$ such that for every $x\in{V(H_i^j)}$, $u\in{V(G^i)\setminus(V(H_i^j\cup H_i^\ell))}$, $xu\in{E(G^i)}$ if and only if $\omega(x)u\in{E(G^i)}$.
The graphs $H_i$ are called the blocks.

	\end{enumerate}

\end{def1}

\noindent{\bf Remark:} For given integers $p,a,k$ and large enough $n$,  $H_1(n,p,a,k)$, $H_2(n,p,a,k)$ and $H_2(n,p,a,b-1,k)$ are graphs in the family $\mathcal{D}(n,p,r)$ for appropriate value of $r$.

The following two theorems also due to Simonovits~\cite{M74,M99} are the base of the proof of Theorem~\ref{THM: Main}.

\begin{thm}[\cite{M74}]\label{thm2}
	Assume that a finite family $\mathcal{L}$ of forbidden graphs with $p(\mathcal{L})=p$ is given. If for some $L\in{\mathcal{L}}$ and $\ell=|V(L)|$,$$L\subseteq{P_\ell\vee{K((p-1)\ell; p-1)}}$$ then there exist $r=r(L)$ and $n_0=n_0(r)$ such that $\mathcal{D}(n,p,r)$ contains an $\mathcal{L}$-extremal graph for every $n>n_0$. Furthermore, if this is the only extremal graph in $\mathcal{D}(n,p,r)$, then it is the unique extremal graph for every sufficiently large $n$.
\end{thm}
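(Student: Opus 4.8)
\textbf{Proof proposal for Theorem~\ref{thm2}.}

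The plan is to reduce the Tur\'an problem for the family $\mathcal{L}$ to a structural classification of near-extremal graphs, and then show that the symmetric construction in $\mathcal{D}(n,p,r)$ is already optimal within the relevant class. Since $p(\mathcal{L})=p$, the Erd\H{o}s--Stone--Simonovits theorem gives $\ex(n,\mathcal{L})=e(T_{n,p})+o(n^2)$, so the extremal graphs are ``close'' to the complete $p$-partite Tur\'an graph. First I would invoke Simonovits's stability method: any $\mathcal{L}$-extremal graph $G_n$ can be transformed, by deleting and reinserting $o(n)$ vertices, into a graph whose vertex set splits into $p$ classes of almost equal size $n_1,\ldots,n_p$ with $|n_i-n/p|$ bounded, where each class induces a graph of bounded maximum degree and the bipartite graphs between classes are complete (or nearly so). The key leverage is the hypothesis $L\subseteq P_\ell\vee K((p-1)\ell;p-1)$: it guarantees that an extremal graph cannot afford to have a long path inside any one class while the remaining $p-1$ classes are densely joined to it, for otherwise a copy of $L$ (hence of some member of $\mathcal{L}$) would appear. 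This bounds the component structure \emph{inside} each class, forcing each $G^i$ to consist of small connected blocks.

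Second, I would perform a symmetrization argument to justify condition~(2) of the definition of $\mathcal{D}(n,p,r)$. Given the near-partition, within each class $i$ one shows that the edges inside that class can be rearranged into identical, vertex-symmetric blocks $H_i$ without decreasing the edge count and without creating a forbidden subgraph. This is the classical Zykov-type symmetrization adapted to the intra-class structure: if two copies of a block were asymmetrically joined to the rest of $G^i$, one could replace the worse-connected copy by a translate of the better one, strictly increasing edges or preserving them, while the forbidden-subgraph condition on $\mathcal{L}$ is maintained because the path-embedding hypothesis caps the block sizes. Iterating produces a graph in $\mathcal{D}(n,p,r)$ with at least as many edges as $G_n$, so $\mathcal{D}(n,p,r)$ contains an extremal graph for all $n>n_0(r)$, where $r=r(L)$ absorbs the exceptional vertices and the bounded block sizes.

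The main obstacle, as I see it, is making the transition from the approximate $p$-partite structure to the \emph{exact} symmetric structure of condition~(2) while controlling the number of exceptional vertices by a constant $r$ independent of $n$. The stability step only yields an $o(n)$-vertex defect; upgrading this to an $O(1)$ defect requires a careful local cleaning argument showing that all but a bounded number of vertices have ``typical'' degree profiles, and that atypical vertices can be stripped off and re-placed into the symmetric skeleton without loss. One must verify that after symmetrization no member of $\mathcal{L}$ reappears---this is exactly where the hypothesis that \emph{some} $L\in\mathcal{L}$ embeds into $P_\ell\vee K((p-1)\ell;p-1)$ is essential, since it pins down precisely how large the blocks $H_i$ may be before a forbidden configuration is forced.

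Finally, for the uniqueness clause, I would argue that if $\mathcal{D}(n,p,r)$ harbors a single extremal graph $G^\ast$, then any other extremal graph $G_n$ outside $\mathcal{D}(n,p,r)$ would, after the symmetrization above, be carried to a graph in $\mathcal{D}(n,p,r)$ with $e(G_n)$ edges; by the assumed uniqueness this image must be $G^\ast$, and a stability/continuity argument shows the symmetrization cannot strictly increase edges here, forcing $G_n\cong G^\ast$. Hence $G^\ast$ is the unique extremal graph for every sufficiently large $n$.
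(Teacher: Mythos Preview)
The paper does not prove Theorem~\ref{thm2}; it is quoted from Simonovits~\cite{M74} as a black-box tool (together with Theorem~\ref{thm3}) and invoked in Section~4 to guarantee that some extremal graph lies in $\mathcal{D}(n,p,r)$. There is therefore nothing in the paper to compare your proposal against.

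As a sketch of Simonovits's original argument your outline is in the right spirit---stability to get an approximate $p$-partition, then symmetrization inside classes to obtain the block structure---but two points are understated. First, the reduction from an $o(n)$ exceptional set to an $O(1)$ exceptional set is the heart of the matter and is not just ``local cleaning'': Simonovits uses a progressive deletion/averaging argument tied to the decomposition family of $\mathcal{L}$, and the hypothesis $L\subseteq P_\ell\vee K((p-1)\ell;p-1)$ is what bounds the \emph{order} of each block $H_i$ by a constant depending only on $\ell$, not merely its maximum degree. Second, your uniqueness paragraph is circular as written: symmetrization may move $G_n$ to $G^\ast$ without increasing edges, but that alone does not force $G_n\cong G^\ast$; one needs that every strict symmetrization step either increases edges or is an isomorphism, which requires additional bookkeeping. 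If you intend to actually reprove the theorem rather than cite it, these are the two places that need real work.
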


\begin{thm}[\cite{M99}]\label{thm3}
	Assume that a finite family $\mathcal{L}$ of forbidden graphs with $p(\mathcal{L})=p$ is given. If for some $L\in{\mathcal{L}}$ and $\ell=|V(L)|$, $$L\subseteq{\ell P_2\vee{K(2(p-1)\ell; p-1)}}$$ then there exit $r=r(L)$ and $n_0=n_0(r)$ such that $\mathcal{D}(n,p,r)$ contains an $\mathcal{L}$-extremal graph for every $n>n_0$. Furthermore, for any $\mathcal{L}$-extremal graph $G\in{\mathcal{D}(n,p,r)}$, we have that
	\begin{enumerate}
		\item[(i)] all blocks of $G$ consist of isolated vertices, i.e. the join graph $G'$ will be a Tur\'{a}n graph $T_{n', p}$.\label{itref}
		\item[(ii)] each exceptional vertex in $V(G)\setminus V(G')$ is joined either to all the vertices of $G{'}$ or to all the vertices of $p-1$ classes of $G{'}$ and to no vertex of the remaining class.\label{itref}
	\end{enumerate}
\end{thm}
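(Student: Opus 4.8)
The plan is to obtain the existence assertion as a corollary of Theorem~\ref{thm2}, and then to establish the two refined structural conclusions (i) and (ii) by forbidden-subgraph arguments that exploit the matching-based embedding hypothesis $L\subseteq \ell P_2\vee K(2(p-1)\ell;p-1)$.

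First I would observe that the hypothesis here is merely a strengthening of the hypothesis of Theorem~\ref{thm2}. Indeed, the path on $2\ell$ vertices contains a matching of size $\ell$, i.e. $\ell P_2\subseteq P_{2\ell}$, so
\[
L\subseteq \ell P_2\vee K(2(p-1)\ell;p-1)\subseteq P_{2\ell}\vee K\big((p-1)(2\ell);p-1\big),
\]
and the right-hand side is exactly the hypothesis of Theorem~\ref{thm2} with $\ell$ replaced by $2\ell$ (note $K((p-1)(2\ell);p-1)=K(2(p-1)\ell;p-1)$, so the join factor is unchanged). Applying Theorem~\ref{thm2} then produces $r=r(2\ell)$ and $n_0=n_0(r)$ such that $\mathcal{D}(n,p,r)$ contains an $\mathcal{L}$-extremal graph for all $n>n_0$. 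Fixing this $r$, it remains to verify (i) and (ii) for an arbitrary $\mathcal{L}$-extremal $G\in\mathcal{D}(n,p,r)$.

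For (i), write $G'=\vee_{i=1}^{p}G^{i}$ with $G^{i}=k_iH_i$ and at most $r$ exceptional vertices, as in the definition of $\mathcal{D}(n,p,r)$. Since $\mathcal{L}$ is finite and the decomposition bounds the block sizes $|H_i|$ by a constant depending only on $\mathcal{L}$ and $p$, the numbers of copies satisfy $k_i=n_i/|H_i|\to\infty$; in particular $k_i\ge 2\ell$ for every $i$ once $n>n_0$. Suppose some block $H_i$ contained an edge. The copies $H_i^{1},\dots,H_i^{k_i}$ are pairwise disjoint and mutually non-adjacent inside $G^i$, so choosing one edge from each of $\ell$ copies yields a copy of $\ell P_2$ inside the class $V_i$. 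In each remaining class $V_j$ $(j\neq i)$, taking one vertex from each of $2\ell$ distinct copies of $H_j$ gives an independent set of size $2\ell$ (distinct copies are non-adjacent); because $G'$ is a join, all edges between different classes are present, so these $p-1$ independent sets span a $K(2(p-1)\ell;p-1)$ completely joined to the $2\ell$ vertices of the matching. Hence $G\supseteq \ell P_2\vee K(2(p-1)\ell;p-1)\supseteq L$, contradicting $\mathcal{L}$-freeness. Therefore every block is a single vertex, each $G^i$ is an independent set, and $G'=T_{n',p}$.

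For (ii), with $G'=T_{n',p}$ and independent classes $V_1,\dots,V_p$, I would analyze how the exceptional set $W$ (with $|W|\le r$) attaches, aiming to show each $v\in W$ is complete to $G'$ or complete to exactly $p-1$ classes and avoids the remaining one. This combines a forbidden-subgraph argument with an extremality (edge-exchange) argument: on one hand, a ``partial'' attachment to two distinct classes, or several exceptional vertices each using a private neighbour of a common class while being complete to the other $p-1$ classes, again assembles an $\ell P_2$ joined to a $K(2(p-1)\ell;p-1)$ and so produces $L$; on the other hand, whenever making $v$ complete to a further class keeps $G$ $\mathcal{L}$-free, adding those edges strictly increases $e(G)$, so an extremal graph is already saturated there. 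These two forces pin each exceptional vertex to the stated dichotomy. The main obstacle is exactly this exceptional-vertex analysis: unlike in (i), where a large matching appears for free from block symmetry, each exceptional vertex is a single vertex, so the forbidden $\ell P_2$ must be built from the joint behaviour of up to $r$ exceptional vertices together with the class structure, and one must simultaneously track their mutual adjacencies and run the exchange argument without disturbing $\mathcal{L}$-freeness, all while keeping $r$ and $n_0$ independent of $n$; a secondary delicate point is justifying the block-size boundedness invoked in (i), which rests on the underlying Simonovits decomposition.
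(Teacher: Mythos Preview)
The paper does not prove this statement: Theorem~\ref{thm3} is quoted from Simonovits~\cite{M99} in the Preliminaries section and used as a black box in the proof of Theorem~\ref{THM: Main}. There is thus no in-paper argument against which to compare your proposal.

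Assessed on its own, your reduction of the existence assertion to Theorem~\ref{thm2} is not licit as written: in Theorem~\ref{thm2} the parameter $\ell$ is defined to be $|V(L)|$, not a free parameter, so ``replacing $\ell$ by $2\ell$'' appeals to a more general form than the one stated here. Your argument for~(i) hinges on the block sizes $|H_i|$ being bounded independently of $n$; you rightly flag this as delicate, and indeed Definition~2.1 in this paper imposes no such bound, so the claim requires external input from the Simonovits machinery. Finally, your treatment of~(ii) is, by your own description, a sketch with an acknowledged ``main obstacle'' rather than an argument. Altogether the proposal outlines a plausible strategy but does not constitute a proof.
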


In the following lemma, we show that $T^{p+1}$ satisfies the condition of Theorems~\ref{thm2} and~\ref{thm3}.
\begin{lem}\label{LEM: T^{p+1}}
For $p\ge 3$ and $\ell=|V(T^{p+1})|$,
$$T^{p+1}\subseteq P_\ell\vee K((p-1)\ell; p-1) \mbox { and } T^{p+1}\subseteq \ell P_2\vee K(2(p-1)\ell; p-1).$$
\end{lem}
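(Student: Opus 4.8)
The plan is to establish both containments simultaneously by fixing a convenient proper $(p+1)$-colouring of $T^{p+1}$ and reading an explicit embedding off it. First I would fix a bipartition $V(T)=A\cup B$ of the tree and recall the structure of $T^{p+1}$: its vertex set is $V(T)\cup\bigcup_{e\in E(T)}W_e$, where for $e=uv$ the set $W_e=\{w_e^1,\dots,w_e^{p-1}\}$ consists of the $p-1$ new vertices, the only edges are those lying inside a clique $\{u,v\}\cup W_e$, and distinct cliques meet only inside $V(T)$. From this one obtains $p+1$ independent sets: $A$, $B$, and $D_j:=\{\,w_e^{j-2}:e\in E(T)\,\}$ for $3\le j\le p+1$. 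Here $A$ and $B$ are independent because the only pair of original vertices inside a clique $\{u,v\}\cup W_{uv}$ is $uv\in E(T)$, and each $D_j$ is independent because any two of its vertices lie in distinct cliques.

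The key observation is that, since $p\ge 3$, there are at least two of the auxiliary classes, say $D_3$ and $D_4$, and the subgraph of $T^{p+1}$ induced on $D_3\cup D_4$ is exactly the matching $\{\,w_e^1w_e^2:e\in E(T)\,\}\cong |E(T)|K_2$, because $w_e^1$ and $w_{e'}^2$ are adjacent if and only if $e=e'$. So I would set $S_0:=D_3\cup D_4$ (a linear forest on $2|E(T)|$ vertices, each component a $K_2$) and rename the remaining $p-1$ independent classes as $S_1:=A$, $S_2:=B$, $S_3:=D_5,\dots,S_{p-1}:=D_{p+1}$, so that $\{S_0,\dots,S_{p-1}\}$ partitions $V(T^{p+1})$.

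For the first containment I would map $S_0$ into $P_\ell$ and each $S_i$ ($1\le i\le p-1$) into the $i$-th class of $K((p-1)\ell;p-1)$. The matching $|E(T)|K_2$ embeds into $P_\ell$ since $2|E(T)|=2(|V(T)|-1)\le |V(T)|+(p-1)(|V(T)|-1)=\ell$ for $p\ge 3$; and each class of $K((p-1)\ell;p-1)$ has $\ell$ vertices, so has room since $|S_i|\le |V(T)|\le\ell$. This is a valid embedding because the only edges of $T^{p+1}$ with both endpoints in one class $S_i$ are the matching edges inside $S_0$ (placed on edges of $P_\ell$), while every other edge of $T^{p+1}$ joins two distinct classes among $S_0,\dots,S_{p-1}$, and all such pairs of classes are completely joined in $P_\ell\vee K((p-1)\ell;p-1)$. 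The second containment is identical: $|E(T)|K_2$ embeds into $\ell P_2$ because $|E(T)|\le\ell$, each class of $K(2(p-1)\ell;p-1)$ has $2\ell\ge|S_i|$ vertices, and the $p-1$ multipartite classes together with the $\ell P_2$ part are again pairwise completely joined, so the same edge-by-edge check goes through.

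I do not expect a genuine obstacle here: the whole content is the observation that merging two of the classes $D_j$ produces a matching, which is precisely why the hypothesis $p\ge 3$ is needed (for $p=2$ the only candidate for a linear-forest class would be $A\cup B$, which induces $T$ and is generally far from a linear forest), together with the trivial size inequalities $2|E(T)|\le\ell$ and $|E(T)|\le\ell$. The only mildly fussy points are verifying that $D_3\cup D_4$ induces nothing beyond that matching and noting that the degenerate case $|V(T)|\le 1$ (where $T^{p+1}$ is a single vertex) is covered trivially by the same scheme.
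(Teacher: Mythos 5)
Your proposal is correct and is essentially the paper's own embedding: the tree is placed across two classes of the complete multipartite part, two of the $p-1$ new vertices of each blown-up edge are sent to a matching edge of $P_\ell$ (resp.\ $\ell P_2$), and the remaining new vertices go one to each of the other classes, with the same size checks $2|E(T)|\le\ell$ and $|E(T)|\le\ell$. The only difference is presentational — you organize the map as a proper $(p+1)$-colouring of $T^{p+1}$ with two colour classes merged, whereas the paper describes it edge by edge — but the resulting assignment is identical.
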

\begin{proof}
Let $S_1, \ldots, S_{p-1}$ be the $p-1$ classes of $K((p-1)\ell; p-1)$. Then $|S_1|=\ldots=|S_{p-1}|=\ell$. Note that $\nu(P_\ell)=\lfloor\frac \ell2\rfloor$. Hence $P_\ell$ has a maximum matching $M$ of size $\lfloor\frac \ell2\rfloor$.
Since $|S_1|=|S_2|>|V(T)|$, we can embed $T$ into $S_1\vee S_2$. Label the edges of $T$ by $e_1, \ldots, e_m$. Now we blow up the edges of $T$ as follows: for $e_i\in E(T)$, choose $v^i_3\in S_3, \ldots, v^{i}_{p-1}\in S_{p-1}$ and an edge $v^i_pv^i_{p+1}$ in $M$. Then $V(e_i)\cup\{v^i_3, \ldots, v^i_{p+1}\}$ induces a copy of $K_{p+1}$ in $P_\ell\vee K((p-1)\ell; p-1)$. Note that $|S_i|>|M_i|=\lfloor\frac \ell2\rfloor\ge m$. This guarantees  that the blow-ups are vertex disjoint.

Clearly, $\ell P_2$ is a matching of size $\ell$ and the size of each class of $K(2(p-1)\ell; p-1)$ is bigger than the one of $K((p-1)\ell; p-1)$. So we can embed $T^{p+1}$ into $K(2(p-1)\ell; p-1)$ in the same way.

\end{proof}

The following definitions and lemma due to Liu~\cite{L13} also play an important role in our proof.

\begin{def1}\label{vertex split}
(1)	Given a graph $H$, a vertex split on a  vertex $v\in{V(H)}$ is defined as follows:
	replace $v$ by an independent set of size $d(v)$ in which each vertex is adjacent to exactly one distinct vertices in $N_H(v)$. Given a vertex subset $U\subseteq{V(H)}$, a vertex split on $U$ means applying vertex split on the vertices in $U$ one by one. The splitting family of $H$, denoted by $\mathcal{H}(H)$, is  the family of graphs that can be obtained from $H$ by applying vertex split on some $U\subseteq{V(H)}$.

(2) Given a family $\mathcal{L}$, let $\mathcal{M}:=\mathcal{M}(\mathcal{L})$ be the family of minimal graphs $M$ that satisfy the following:
there exits an $L\in{\mathcal{L}}$ and a $t=t(L)$ such that $L\subseteq{M'\vee{K((p-1)t; p-1)}}$, where $M'=M\cup{K_t^c}$ (putting $M$ into an independent set of size $t$). We call $\mathcal{M}$ the decomposition family of $\mathcal{L}$.
\end{def1}

\begin{lem}[\cite{L13}]\label{LEM: M=H}
Given $p\ge{3}$ and any graph $H$ with $\chi(H)\le{p-1}$, $\mathcal{M}(H^{p+1})=\mathcal{H}(H)$. In particular, a matching of size $e(H)$ is in $\mathcal{M}(H^{p+1})$.
\end{lem}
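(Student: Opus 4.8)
The plan is to collapse the two set-inclusions into one clean characterization: I will show that a graph $M$ satisfies the defining condition of the decomposition family (namely $H^{p+1}\subseteq M'\vee K((p-1)t;p-1)$ for some $t$) \emph{if and only if} $M$ contains some member of the splitting family $\mathcal{H}(H)$ as a subgraph. Granting this, the minimality reduction is immediate: splitting is an edge-preserving operation, so every member of $\mathcal{H}(H)$ has exactly $e(H)$ edges and, as we may assume $H$ has no isolated vertex, no isolated vertex either; hence no member of $\mathcal{H}(H)$ embeds as a proper subgraph of another. The minimal graphs satisfying the condition are therefore exactly the members of $\mathcal{H}(H)$, which gives $\mathcal{M}(H^{p+1})=\mathcal{H}(H)$. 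The ``in particular'' clause then falls out, since splitting \emph{all} vertices of $H$ turns each edge into an isolated $K_2$, producing the matching $e(H)K_2\in\mathcal{H}(H)$.

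The single fact driving both directions is the following clique observation. In the host $M'\vee K((p-1)t;p-1)$, write $V_1$ for the class carrying the edges of $M$ and $V_2,\dots,V_p$ for the $p-1$ complete-multipartite classes. Any copy of $K_{p+1}$ in the host meets each of $V_2,\dots,V_p$ in at most one vertex, hence meets $V_1$ in at least two vertices; being part of a clique, those two are adjacent, so they span an edge of $M$. Thus every blown-up edge of $H^{p+1}$ must ``consume'' an edge of $M$ lying inside $V_1$, and the $V_1$-part $C_e$ of each clique image is a clique of $M$ of size at least $2$.

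For the forward construction I would, given $H'\in\mathcal{H}(H)$ with split set $U$, place the unsplit vertices $V(H)\setminus U$ into $V_1$ and the split vertices $U$ into $V_2,\dots,V_p$ via a proper $(p-1)$-colouring of $H[U]$, which exists because $\chi(H)\le p-1$. For each edge $e=uv$ of $H$ I build its clique by choosing its two ``extra'' vertices inside $V_1$: the $M$-edge $\phi(u)\phi(v)$ if both endpoints are unsplit; an edge from the unsplit endpoint to a fresh $V_1$-vertex if exactly one is split; a fresh $V_1$-edge if both are split. The remaining clique vertices are distributed one per class among $V_2,\dots,V_p$, where $p\ge 3$ guarantees enough classes in all three cases, and taking $t$ large keeps everything vertex-disjoint. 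Reading off the $V_1$-edges so created shows they reconstruct $H'$ exactly: an unsplit vertex retains its full star, while a split vertex becomes $\deg(v)$ degree-one stubs. Hence $H^{p+1}\subseteq H'\vee K((p-1)t;p-1)$ (with $H'$ padded by $K_t^c$ to size $t$).

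For the converse I would fix an embedding $\phi$, set $U=\{v\in V(H):\phi(v)\notin V_1\}$, and for each edge $e=uv$ select one $M$-edge $f_e\subseteq C_e$, insisting that $f_e$ be incident to $\phi(u)$ whenever $u\notin U$. The hard part will be proving distinctness, i.e.\ that the $f_e$ assemble into precisely the split graph $H'_U$ rather than something denser or degenerate; this is where the definition of the edge blow-up is essential. Because the new vertices of distinct cliques of $H^{p+1}$ are all different, distinct cliques meet only in shared original vertices, so the non-original endpoints of the $f_e$ are images of pairwise distinct new vertices, are distinct from every $\phi(v)$, and each lies in a single $f_e$. Pulling the $f_e$ back through $\phi$ then identifies $\bigcup_e f_e\subseteq M$ with $H'_U\in\mathcal{H}(H)$. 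I expect this bookkeeping — ruling out unintended coincidences among the extracted $V_1$-edges — to be the main obstacle, whereas the forward construction and the edge-count minimality argument are routine once the clique observation is established.
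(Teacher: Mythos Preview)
The paper does not supply its own proof of this lemma; it is quoted from Liu~\cite{L13} and used as a black box, so there is no in-paper argument to compare against.

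Your proposal is correct. The clique observation---that any $K_{p+1}$ in $M'\vee K((p-1)t;p-1)$ must place at least two (hence adjacent) vertices in $V_1$---is the right engine, and your forward construction invokes $p\ge 3$ precisely where it must, namely when both endpoints of an $H$-edge lie in the split set $U$ and need two distinct classes among $V_2,\dots,V_p$. The ``bookkeeping'' you flag as the main obstacle in the converse is in fact routine: since the embedding $\phi$ is injective and the new vertices of distinct cliques of $H^{p+1}$ are pairwise disjoint by definition of the edge blow-up, the selected $V_1$-edges $f_e$ are automatically distinct and their fresh endpoints cannot coincide with any $\phi(u)$, so they assemble exactly into $H'_U$. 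The edge-count minimality argument (each member of $\mathcal{H}(H)$ has $e(H)$ edges and no isolated vertex, hence none properly contains another) then pins down the minimal graphs. One small point: your parenthetical ``as we may assume $H$ has no isolated vertex'' should be made explicit---an isolated vertex of $H$ is absorbed by the $K_t^c$ padding on the $\mathcal{M}$ side and is deleted by splitting on the $\mathcal{H}$ side, so neither family changes under its removal. In the paper's application $H$ is a tree and the issue does not arise.
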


The following lemma shows that the split operations increase the matching number.

\begin{lem}\label{LEM: split}
Given a graph $H$ and let $\mathcal{H}(H)$ be its splitting family. Then $\nu(H')\ge \nu(H)$ for any $H'\in\mathcal{H}(H)$.
\end{lem}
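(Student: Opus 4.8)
The plan is to reduce to a single split and then compare matchings directly. First I would observe that performing a vertex split on a vertex $v$ does not change the degree of any other vertex: if $w\in N_H(v)$, then in the new graph $w$ loses the neighbor $v$ but gains exactly one of the new split-vertices assigned to it, so $\deg(w)$ is unchanged, and vertices outside $N_H[v]$ are untouched. Consequently the iterated process ``apply a vertex split on the vertices of $U$ one by one'' is well defined regardless of the order, and by induction on $|U|$ it suffices to prove the inequality when $H'$ is obtained from $H$ by a single vertex split on one vertex $v\in V(H)$.

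So fix such an $H'$, where $v$ is replaced by an independent set $\{v_u : u\in N_H(v)\}$ with $v_u$ adjacent only to $u$, and let $M$ be a maximum matching of $H$, so $|M|=\nu(H)$. Every edge of $H$ not incident to $v$ is still an edge of $H'$. If $v$ is not covered by $M$, then $M$ uses no edge incident to $v$, hence $M\subseteq E(H')$ and $\nu(H')\ge|M|=\nu(H)$. If $v$ is covered by $M$, say $vu\in M$, then $M\setminus\{vu\}$ consists of edges not incident to $v$ (as $M$ is a matching), so it lies in $H'$; adding the edge $v_uu$ of $H'$ gives a set $M':=(M\setminus\{vu\})\cup\{v_uu\}$ which is still a matching of $H'$, since $v_u$ is a new vertex touched by no edge of $M\setminus\{vu\}$ and $u$ was covered in $M$ only by $vu$. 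Thus $\nu(H')\ge|M'|=|M|=\nu(H)$, completing the single-split case and hence, by the induction above, the lemma.

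There is essentially no hard step here; the only point that needs care is the observation in the first paragraph that splitting one vertex preserves all other degrees, which is what makes the ``one vertex at a time'' induction legitimate. The rest is the routine two-case argument (whether or not $v$ is saturated by a fixed maximum matching).
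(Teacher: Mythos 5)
Your proof is correct and follows essentially the same route as the paper: reduce to a single vertex split, then either keep the maximum matching $M$ (if $v$ is unsaturated) or replace the edge $vu\in M$ by $v_uu$. The extra observation that splitting preserves the degrees of the other vertices is a harmless elaboration of the reduction step that the paper leaves implicit.
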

\begin{proof}
Let $H'\in\mathcal{H}(H)$ and assume $H'$ is obtained from $H$ by applying vertex split on $U\subseteq V(H)$. It is sufficient to show $\nu(H')\ge \nu(H)$ when $|U|=1$. Assume $U=\{u\}$.  Let $M$ be a maximum matching in $H$. If $u$ is not covered by $M$ then $M$ is still a matching in $H'$. We are done. Now assume $u$ is covered by $M$ and the edge $uv\in M$. By the definition of vertex split, there is a copy $u'$ of $u$ with $u'v\in E(H')$. Then the edge set obtained from replacing $uv$ in $M$ by $u'v$ is a matching in $H'$,  we are done.
\end{proof}



We also need the following fundamental results in graph theory.
\begin{thm}[\cite{BM76}]
\begin{itemize}
\item[(1)] (Hall's Theorem)\label{THM: Hall}
 A bipartite graph $G$ with bipartition $(X,Y)$ has a matching which covers every vertex in $X$ if and only if $|S|\le |N(S)|$ for all $S\subseteq X$.

\item[(2)] (K\"{o}nig's Theorem) If $G$ is a bipartite graph, then $\beta(G)=\nu(G)$.

\item[(3)] (Gallai's Theorem) If $G$ is a graph, then $\alpha(G)+\beta(G)=|V(G)|$.	
\end{itemize}
\end{thm}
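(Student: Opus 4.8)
The plan is to treat the three statements in increasing order of difficulty, deriving the later ones from the machinery used for the earlier. I would begin with Gallai's Theorem (3), which is purely set-theoretic and needs no structural argument: a set $S\subseteq V(G)$ is independent precisely when its complement $V(G)\setminus S$ is a vertex cover, since an edge lies inside $S$ if and only if it misses $V(G)\setminus S$. Thus $S\mapsto V(G)\setminus S$ is a containment-reversing bijection between independent sets and vertex covers, so a maximum independent set corresponds to a minimum vertex cover of complementary size, and taking cardinalities yields $\alpha(G)+\beta(G)=|V(G)|$.

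Next I would address Hall's Theorem (1), the technical core. Necessity is immediate: if a matching $M$ saturates $X$, then for any $S\subseteq X$ the $M$-partners of the vertices of $S$ are distinct vertices of $N(S)$, so $|N(S)|\ge |S|$. For sufficiency I would argue by contradiction using alternating paths. Suppose Hall's condition holds but a maximum matching $M$ leaves some $u\in X$ unsaturated. Let $Z$ be the set of all vertices reachable from $u$ by $M$-alternating paths, and set $S=Z\cap X$ and $T=Z\cap Y$. Since $M$ is maximum there is no $M$-augmenting path, so $u$ is the only unsaturated vertex of $Z$; consequently $M$ restricts to a bijection between $S\setminus\{u\}$ and $T$, giving $|T|=|S|-1$. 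Moreover $N(S)=T$, because any neighbour of a vertex of $S$ is reached by extending an alternating path and hence lies in $T$. Therefore $|N(S)|=|S|-1<|S|$, contradicting Hall's condition. (Alternatively one may induct on $|X|$, splitting on whether some proper nonempty $S\subset X$ meets Hall's condition with equality.)

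Finally I would deduce K\"onig's Theorem (2). The inequality $\nu(G)\le\beta(G)$ holds in every graph, as a vertex cover must contain a distinct endpoint of each edge of a matching. For the reverse inequality in the bipartite case I would reuse the alternating-path construction: let $M$ be a maximum matching, let $U\subseteq X$ be its unsaturated vertices, let $Z$ be the vertices reachable from $U$ by $M$-alternating paths, and set $S=X\cap Z$, $T=Y\cap Z$. Then $K=(X\setminus S)\cup T$ is a vertex cover, since an uncovered edge would have to join $S$ to $Y\setminus T$, which the definition of $Z$ forbids, and a short count using the facts that $T$ is matched into $S$ while $U$ is unmatched shows $|K|=|M|=\nu(G)$. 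Hence $\beta(G)\le\nu(G)$, completing the proof.

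The main obstacle throughout is the sufficiency direction of Hall's Theorem together with the matching K\"onig construction, both of which hinge on a correct analysis of the reachability set $Z$ under $M$-alternating paths and on the maximality of $M$ ruling out augmenting paths. The delicate point is verifying $N(S)\subseteq T$ (equivalently, that $K$ covers every edge), which is exactly where bipartiteness and the absence of an augmenting path are used; Gallai's part, by contrast, is essentially a one-line complementation argument.
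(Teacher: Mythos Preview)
Your proofs are correct and are essentially the standard textbook arguments (Gallai via complementation, Hall via alternating paths from an unsaturated vertex, K\"onig via the same reachability set to build a cover of size $|M|$). However, the paper does not prove this theorem at all: it is stated as a cited result from \cite{BM76} under the heading ``We also need the following fundamental results in graph theory,'' and the paper immediately proceeds to its own lemmas without proof. So there is no ``paper's own proof'' to compare against; you have supplied complete arguments where the paper simply quotes the literature.
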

We will prove some lemmas for preparation.
\begin{lem}\label{LEM: Beta-(i)-(iii)}
Let $T$ be a tree with two color classes $A$ and $B$. Assume $|A|\le{|B|}$ and $\delta(A)\ge{2}$. Then the following holds.
\begin{enumerate}
\item[(i)] $\beta(T)=|A|$;

\item[(ii)] $\min\{\beta(T') : T'\in\mathcal{H}(T)\}\ge |A|$;

\item[(iii)] $K_{|A|-1}\vee {K_m^c}$ is $\mathcal{H}(T)$-free for any $m\ge{1}$.

\end{enumerate}
\end{lem}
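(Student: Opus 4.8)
The plan is to establish the three parts in order, using each to bootstrap the next. For part (i), since $T$ is bipartite with color classes $A$ and $B$, K\"onig's Theorem gives $\beta(T)=\nu(T)$, and clearly $\beta(T)\le |A|$ because $A$ is a vertex cover. For the reverse inequality $\beta(T)\ge|A|$, I would invoke Hall's Theorem to show $T$ has a matching saturating $A$: for any $S\subseteq A$, the edges incident to $S$ all go to $N(S)\subseteq B$, and since $\delta(A)\ge 2$ every vertex of $S$ has at least two neighbors, while in the forest $T[S\cup N(S)]$ the number of edges is at most $|S|+|N(S)|-1$; combined with $e(T[S\cup N(S)])\ge 2|S|$ this forces $|N(S)|\ge |S|+1>|S|$. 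Hence Hall's condition holds, $\nu(T)=|A|$, and $\beta(T)=|A|$.

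For part (ii), let $T'\in\mathcal{H}(T)$ be obtained by a vertex split on some $U\subseteq V(T)$. By Lemma~\ref{LEM: split}, $\nu(T')\ge\nu(T)=|A|$ (using part (i)). Since every $T'\in\mathcal{H}(T)$ is obtained by splitting vertices of a forest, $T'$ is itself a forest, hence bipartite, so K\"onig's Theorem applies again and $\beta(T')=\nu(T')\ge|A|$. Taking the minimum over all $T'\in\mathcal{H}(T)$ gives the claim. (One should note that $\mathcal{H}(T)$ is a finite family, so the minimum is attained.)

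For part (iii), suppose toward a contradiction that some $T'\in\mathcal{H}(T)$ is a subgraph of $G:=K_{|A|-1}\vee K_m^c$. Observe that the $m$-vertex independent set forming the $K_m^c$ part is actually an independent set in $G$ as well only in the sense that it has no internal edges, but in $G$ every such vertex is adjacent to all of $K_{|A|-1}$; the point is that the $|A|-1$ vertices of the clique part form a vertex cover of $G$ (every edge of $G$ meets the clique part, since $K_m^c$ contributes no edges). Therefore $\beta(G)\le |A|-1$, and any subgraph $T'$ of $G$ satisfies $\beta(T')\le\beta(G)\le|A|-1<|A|$, contradicting part (ii). Hence no such $T'$ exists, i.e. $K_{|A|-1}\vee K_m^c$ is $\mathcal{H}(T)$-free for every $m\ge 1$.

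The only genuinely substantive step is the Hall-type argument in part (i); everything afterward is a routine combination of K\"onig's Theorem, Lemma~\ref{LEM: split}, and the observation that the clique side of $K_{a-1}\vee K_m^c$ is a vertex cover. The main obstacle is making sure the edge count $e(T[S\cup N(S)])\le |S\cup N(S)|-1$ is combined correctly with $\delta(A)\ge 2$; I would phrase it as: if $|N(S)|\le|S|$ then $e(T[S\cup N(S)])\le |S|+|N(S)|-1\le 2|S|-1<2|S|\le\sum_{v\in S}\deg_T(v)=e_T(S,N(S))=e(T[S\cup N(S)])$, a contradiction. This also quietly uses that $T$ is connected only to the extent that each induced subforest has at most $|V|-1$ edges, which holds for any forest.
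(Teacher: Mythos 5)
Your proposal is correct and follows essentially the same route as the paper: the Hall--K\"onig argument with the forest edge bound $e_T(S,N(S))\le|S|+|N(S)|-1$ versus $e_T(S,N(S))\ge 2|S|$ for part (i), Lemma~\ref{LEM: split} plus K\"onig for part (ii), and the observation that the $K_{|A|-1}$ side is a vertex cover of $K_{|A|-1}\vee K_m^c$ for part (iii). The only differences are cosmetic (you spell out the monotonicity of $\beta$ under subgraphs and that members of $\mathcal{H}(T)$ are forests, which the paper leaves implicit).
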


\begin{proof}
(i) We first claim that $\nu(T)=|A|$. For any $S\subseteq{A}$,
$\delta(S)\ge \delta(A)\ge2$. So $2|S|\le e_T(S, N(S))\le |S|+|N(S)|-1$. Thus $|S|\le{|N(S)|}$ for any $S\subseteq A$.
By Hall's Theorem, $T$ has a matching saturated $A$.
That is, $\nu(T)=|A|$. By K\"onig's Theorem, $\beta(T)=\nu(T)=|A|$.

(ii) By Lemma~\ref{LEM: split}, $\nu(T')\ge \nu(T)=|A|$ for all $T'\in\mathcal{H}(T)$. By K\"onig's Theorem, $\beta(T')=\nu(T')\ge |A|$, we are done.

(iii) Clearly, $\beta(K_{|A|-1}\vee K_m^c)\le |A|-1$. The statement follows directly from (ii).
\end{proof}

In what follows, given a tree $T$, let $A, B, A_0, B_0$ and $a, b, k, p$ be defined the same as in Theorem~\ref{THM: Main}.
Note that in the rest of this section we always assume $k\ge 2$.
Given a $\tilde{T}\in\mathcal{H}(T)$, let $S$ be the set of splitting vertices in $A$ and let $\tilde{S}$ be the set of vertices split from $S$. Denote $s=|S|$ and $m=|\tilde{S}|$.
Let $\tilde{A}=(A\setminus S)\cup \tilde{S}$ and $\tilde{B}=V(\tilde{T})\setminus{\tilde{A}}$.

\begin{lem}\label{LEM: splitting on $A$}
If $s\ge 1$ then	$\nu(\tilde{T})\ge a-1+k$.
\end{lem}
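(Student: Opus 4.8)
The plan is to start from a maximum matching $M$ of $T$, which by Lemma~\ref{LEM: Beta-(i)-(iii)}(i) has size exactly $a=|A|$ and saturates $A$, and then use the fact that at least one vertex of $A$ has been split to build a strictly larger matching in $\tilde T$. Fix a splitting vertex $u\in S\subseteq A$; since $u\in A$ and $k=\delta(A)$, we have $\deg_T(u)\ge k$, so the vertex split replaces $u$ by an independent set $\{u^1,\dots,u^{d}\}$ with $d=\deg_T(u)\ge k$, where $u^i$ is adjacent to exactly the $i$-th neighbor of $u$ (under some fixed ordering $w_1,\dots,w_d$ of $N_T(u)$). In $M$, the vertex $u$ is matched to exactly one neighbor, say $w_1$; the edge $uw_1$ corresponds in $\tilde T$ to the edge $u^1w_1$. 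First I would record that $M':=(M\setminus\{uw_1\})\cup\{u^1w_1\}$ is a matching in $\tilde T$ of size $a$ that saturates all of $\tilde A$ except possibly the copies $u^2,\dots,u^d$ of $u$ — but these copies are leaves in $\tilde T$ attached to $w_2,\dots,w_d$, and $w_2,\dots,w_d\in B$.

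The key step is an augmentation argument on the $B$-side. The $d-1\ge k-1$ vertices $u^2,\dots,u^d$ are currently unsaturated by $M'$, and each $u^i$ has a private neighbor $w_i\in N_T(u)\subseteq B$ (private in the sense that in $\tilde T$, $w_i$ is the unique neighbor of $u^i$). If some $w_i$ is itself unsaturated by $M'$, we add the edge $u^iw_i$ and immediately gain. If $w_i$ is saturated by $M'$, we would like to find an $M'$-alternating path starting at the unsaturated $u^i$: since $\tilde T$ is a forest (a split of a tree is still acyclic — the split operation cannot create a cycle), standard alternating-path reasoning applies. I would argue that among the neighbors $w_2,\dots,w_d$ of the split copies, we can find at least one more augmenting step: concretely, build the matching $M'$ and then greedily saturate as many $u^i$ as possible. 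The claim I want is that we can saturate all but one of $u^2,\dots,u^d$ simultaneously, which combined with the original $a$ edges gives $\nu(\tilde T)\ge a + (k-2)$... this is not quite enough, so the real target is to saturate $k-1$ of them while keeping $a-1$ of the original edges, i.e. give up only the single edge $uw_1$ but in exchange pick up $k-1$ new edges at the copies, yielding $(a-1)+(k-1)+1 = a-1+k$ — wait, let me recount: we keep $a-1$ edges of $M$ not incident to $u$, and we want $k$ edges incident to the copies $u^1,\dots,u^d$ forming a matching; since the copies form an independent set and their neighborhoods $w_1,\dots,w_d$ are distinct, $\{u^iw_i : i\in I\}$ is a matching for any $I\subseteq[d]$, but these must also avoid the $w$'s used by the kept $a-1$ edges.

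So the heart of the argument is: can we choose $k$ of the copies $u^1,\dots,u^d$ whose partners $w_1,\dots,w_d$ are not already covered by the $a-1$ edges of $M\setminus\{uw_1\}$? The $a-1$ kept edges cover at most $a-1$ vertices of $B$, but they cover exactly those $B$-vertices matched by $M$ other than $w_1$. If at least $k$ of $w_1,\dots,w_d$ are $M$-unsaturated, we are done directly. Otherwise I would invoke Hall-type / König-type reasoning on the bipartite tree: apply Lemma~\ref{LEM: Beta-(i)-(iii)}(i)-style analysis to see that $T$ restricted near $u$ has slack, or more cleanly, use Lemma~\ref{LEM: split} iteratively together with the observation that splitting $u$ into $d\ge k$ leaves forces the matching number up by enough. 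The main obstacle I anticipate is precisely this combinatorial bookkeeping: the naive bound from one augmentation only gives $\nu(\tilde T)\ge a+1$, and squeezing out $a-1+k$ requires genuinely using that \emph{all} $d\ge k$ new leaves at $u$ are available and that in a tree one can route alternating paths to absorb almost all of them. I would handle this by the following clean reformulation: delete $u$ from $T$ to get a forest $T-u$ with components; $T-u$ plus the $d$ pendant leaves $u^1,\dots,u^d$ (one attached to each $w_i$) is exactly (an induced subgraph of) $\tilde T$; then $\nu(\tilde T)\ge \nu(T-u) + (\text{number of components of }T-u\text{ we can hit with a fresh leaf})$, and since $\nu(T-u)\ge \nu(T)-1 = a-1$ while the $d\ge k$ leaves attach to $d$ distinct vertices spread over the components, a short case analysis (does a given $w_i$ lie in a component where it is already saturated by a max matching, and if so is it the endpoint of an alternating path?) yields the extra $k-1$ edges. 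I expect this last case analysis — showing the $k-1$ gain rather than just $1$ — to be the crux, and I would present it via alternating paths in the forest $T-u$, using that each component is a tree so every saturated vertex reachable from a leaf admits the desired alternating extension unless it is "blocked," and counting blocked vertices against the $d\ge k$ available leaves.
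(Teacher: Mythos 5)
Your overall strategy (take a maximum matching of $T$ that saturates $A$, then absorb the $d\ge k$ pendant copies of the split vertex component by component in $T-u$) can be made to work, but as written the proposal stops exactly where the content of the lemma lies. The statement you actually need --- that $k-1$ further copies $u^2,\dots,u^d$ can be saturated \emph{simultaneously} while retaining $a-1$ edges of the old matching --- is deferred to ``a short case analysis \dots yields the extra $k-1$ edges'' and flagged by you as ``the crux,'' but never carried out. In your own component-wise reformulation the missing claim is: for each component $T_i$ of $T-u$ (each contains exactly one neighbour $w_i$ of $u$, since $T$ is a tree), the vertex $w_i$ is exposed in \emph{some} maximum matching of $T_i$, i.e.\ $\nu(T_i-w_i)=\nu(T_i)$; only then does attaching the pendant leaf $u^i$ raise $\nu(T_i)$ by one, giving $\nu(\tilde T)\ge\sum_i\nu(T_i)+d=(a-1)+d\ge a-1+k$. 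Proving that claim uses the hypotheses in an essential way: every $x\in A\cap V(T_i)$ has degree $\ge 2$ in $T_i$ and, by acyclicity, loses at most one neighbour when $w_i$ is deleted, and the forest bound $e(S,N(S))\le |S|+|N(S)|-1$ then yields Hall's condition for $A\cap V(T_i)$ in $T_i-w_i$. None of this appears in the proposal; without it your argument only certifies $\nu(\tilde T)\ge a$, as you yourself observe. (You also do not address the reduction to the case where exactly one vertex is split --- $\tilde T$ may involve further splits in $A$ or in $B$ --- which the paper handles by Lemma~\ref{LEM: split}.)

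The paper sidesteps the augmenting-path bookkeeping entirely: after reducing to $s=1$, it verifies Hall's condition for the whole set $\tilde A$ at once. Writing $A'=A_1'\cup A_2'$ with $A_1'$ contained in the degree-one copies of the split vertex $v$ and $A_2'\subseteq A\setminus\{v\}$, one has $|N(A_1')|=|A_1'|$ exactly; each $x\in A_2'$ has degree $\ge 2$ and at most one neighbour in $N(v)$ (else a $4$-cycle), and the forest edge count applied to $A_2'\cup\{v\}$ forces $|N(A_2')\setminus N(v)|\ge |A_2'|$, whence $|N(A')|\ge |A'|$ and Hall gives a matching saturating all $\ge a-1+k$ vertices of $\tilde A$. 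I recommend either adopting that route or supplying the per-component exposure argument sketched above; as it stands the proposal correctly identifies the crux but does not prove it.
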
	

\begin{proof}
	By Lemma~\ref{LEM: split}, we need only to prove the lemma for $s=1$. Then $m\ge k$. Let $S=\{v\}$.
	Since $|\tilde{A}|\ge a-1+m\ge a-1+k$, by Hall's Theorem, it suffices to prove that for any $A'\subseteq \tilde{A}$, $|N_{\tilde{T}}(A')|\ge|A'|$.
	
	Fix an $A'\subseteq \tilde{A}$, let $A'_1=A'\cap \tilde{S}$ and $A'_2=A'\cap (A\backslash\{v\})$.
	Note that for any $u\in A'_2$, we have that $d_{\tilde{T}}(u)\ge 2$ and $|N_{\tilde{T}}(u)\cap N_{\tilde{T}}(v)|\le 1$ (since $\tilde{T}$ is a forest).
	Thus $$|N_{\tilde{T}}(A')|=|N_{\tilde{T}}(A'_1)|+|N_{\tilde{T}}(A'_2)\backslash N_{\tilde{T}}(A'_1)|\ge |A'_1|+|A'_2|=|A'|.$$
\end{proof}

\begin{lem}\label{LEM: EX-k-even}

(1)If $k\equiv0\pmod2$ then all graphs in $\mathcal{H}_1(n,p,a,k)\cup \mathcal{H}_2(n,p,a,k)$ are $T^{p+1}$-free.

(2) If $k\equiv 1\pmod2$ and $B_0=\emptyset$ then all graphs in $\mathcal{H}_2(n,p,a,k)$ are $T^{p+1}$-free.

(3)
If $k\equiv 1\pmod2$, $B_0\not=\emptyset$ and $b=0$ or $0<b<a-1-\lceil{\frac{k-1}{a-1}\rceil}$, then all graphs in $\mathcal{H}_1(n,p,a,k)$ are $T^{p+1}$-free.

(4)
If $k\equiv 1\pmod2$, $B_0\not=\emptyset$ and $b>\max\{0, a-1-\lceil{\frac{k-1}{a-1}\rceil}\}$, then all graphs in $\mathcal{H}_2(n,p,a,b-1,k)$ are $T^{p+1}$-free.

(5) If $k\equiv 1\pmod2$, $B_0\not=\emptyset$ and $b=a-1-\lceil{\frac{k-1}{a-1}\rceil}>0$, then all graphs in $\mathcal{H}_1(n,p,a,k)\cup\mathcal{H}_2(n,p,a,b-1,k)$ are $T^{p+1}$-free.

\end{lem}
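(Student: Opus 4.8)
For each of the five cases I want to show that the displayed graph(s) $G$ contain no copy of $T^{p+1}$. Every such $G$ has the shape $G=W\vee L$, where $W$ is the ``exceptional'' part --- $W=K_{a-1}$ in $\mathcal H_1(n,p,a,k)$ and $\mathcal H_2(n,p,a,k)$, and $W=R(a-1,b-1)$ in $\mathcal H_2(n,p,a,b-1,k)$ --- and $L=L_i(n_1,\dots,n_p;k)$ is a complete $p$-partite graph with one distinguished colour class $V_1$ carrying a small gadget $D$ (the rest of $V_1$ being an independent set), where $D=R(2k-1,k-1)$ for $\mathcal H_1$, and $D=R(k+1,k-1)\cup K_{k-1}$ ($k$ even) or $D=2K_k$ ($k$ odd) for $\mathcal H_2$. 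Since these graphs lie in the Simonovits family $\mathcal D(n,p,r)$ and $\chi(T^{p+1})=p+1$, and since by Lemma~\ref{LEM: T^{p+1}} the $p-1$ ``blow-up'' colour classes of a copy of $T^{p+1}$ may be placed into $V_2,\dots,V_p$, a copy of $T^{p+1}$ in $G$ forces an embedding of some $\tilde T\in\mathcal H(T)$ into $W\vee(D\cup\overline{K_m})$ (this is the content of $\mathcal M(T^{p+1})=\mathcal H(T)$, Lemma~\ref{LEM: M=H}), in which the part of $\tilde T$ landing in the independent remainder $\overline{K_m}$ is independent and has no edge to the part landing in $D$. Concretely, there is a set $U\subseteq V(\tilde T)$ such that $\tilde T[U]$ is a subgraph of $W$ and the union of the non-trivial components of $\tilde T-U$ embeds, disjointly, as a subgraph of $D$. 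The plan is to rule out any such pair $(\tilde T,U)$.

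\textbf{Reducing to $\tilde T=T$.} If $\tilde T$ is obtained by splitting at least one vertex of $A$, then by Lemma~\ref{LEM: splitting on $A$} $\nu(\tilde T)\ge a+k-1$, and since $\tilde T$ is a forest, $\beta(\tilde T)=\nu(\tilde T)\ge a+k-1$ by K\"onig's Theorem. On the other hand, if $\tilde T$ embeds via a set $U$ as above, the non-trivial components of $\tilde T-U$ form a forest embeddable in $D$, which is $kK_2$-free (Theorems~\ref{LEM: AHS} and~\ref{LEM: 2K_{1,k-1}}), so $\beta(\tilde T-U)=\nu(\tilde T-U)\le\nu(D)\le k-1$, whence $\beta(\tilde T)\le|U|+\beta(\tilde T-U)\le(a-1)+(k-1)=a+k-2$ --- a contradiction. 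Splits involving only vertices of $B$ are handled by the same argument as for $T$ itself, using $\beta(\tilde T)\ge a$ from Lemma~\ref{LEM: Beta-(i)-(iii)}(ii) together with the local analysis of $A$-vertices below. So I may assume $\tilde T=T$: the task becomes showing that no $U\subseteq V(T)$ with $T[U]\subseteq W$ has all non-trivial components of $T-U$ embeddable in $D$.

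\textbf{The case analysis.} Now $|U|\le a-1<a=\beta(T)$, so $U$ misses an edge of $T$ and $T-U$ has a non-trivial component, which meets $A$ in a vertex $v$ with $\deg_T(v)\ge k$ and hence with a neighbour in $U\cap B$ (since $\Delta(D)\le k-1$). Writing $U=A_U\cup B_U$ and $A^{*}=A\setminus A_U$ (so $|A^{*}|\ge1$), I would, for each $v\in A^{*}$, bound below the number of its $T$-neighbours forced into $B_U$ --- all $\deg_T(v)$ of them if $v$ is isolated in $T-U$, at least $\deg_T(v)-\Delta(D)\ge1$ otherwise --- and weigh this against the capacity of $D$ (the non-trivial components of $T-U$ have at most $|V(D)|\le2k$ vertices in total, and $D$ has at most two non-trivial components). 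The contradiction is produced differently in the three regimes. For $k$ even (case (1)), and for the $\mathcal H_1$-graphs in cases (3) and (5), the gadget $D$ is $2K_{1,k-1}$-free (Theorem~\ref{LEM: 2K_{1,k-1}}); this, together with $\Delta(D)\le k-1$, is exactly what blocks every choice of $U$. For $k$ odd with $B_0=\emptyset$ (case (2)), the gadget $2K_k$ does contain $2K_{1,k-1}$, but the hypothesis $B_0=\emptyset$ --- no vertex of $B$ has two neighbours in $A_0$ --- is used to show that the at most two non-trivial components hosted by $2K_k$ cannot jointly absorb $A^{*}$. For $k$ odd with $B_0\neq\emptyset$ (cases (4) and (5) via $\mathcal H_2(n,p,a,b-1,k)$), $2K_k$ again contains $2K_{1,k-1}$, so the block must come from the sparser exceptional part: a vertex $y\in B_0$ has $\deg_T(y)\ge\delta(B_0)=b+2$, and since $\Delta(W)\le b-1$, $y$ cannot be placed into $U$ together with all but at most two of its neighbours; propagating this constraint through all admissible $U$ (in particular through the trees $\tilde T-U$ one might try to pack into the two cliques of $2K_k$) yields the conclusion.

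\textbf{Main obstacle.} The reduction and the elimination of genuine splits are routine. The heart of the proof is the last step: verifying, case by case, that the small-graph conditions (``$D$ is $\{K_{1,k},kK_2,2K_{1,k-1}\}$-free'', or ``$B_0=\emptyset$'', or ``$\Delta(W)\le\delta(B_0)-3$'') really do forbid \emph{every} placement of $T$ into $W\vee(D\cup\overline{K_m})$. The subtle point is that one cannot restrict attention to the ``obvious'' placements (delete all of $A$ but one vertex, or delete all of $A$ but two vertices plus their common neighbour); one must control all partitions $U=A_U\cup B_U$ simultaneously, including the possibility of packing several small components of $T-U$ into the two cliques of $2K_k$. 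This is where the counting of the excess degrees $\deg_T(v)-\Delta(D)$ over $v\in A^{*}$, balanced against the defining degree conditions on $B_0$ and on $W$, has to be carried out carefully.
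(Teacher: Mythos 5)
Your framework is the right one and coincides with the paper's: pass to the decomposition family $\mathcal M(T^{p+1})=\mathcal H(T)$, so that it suffices to show $K\vee C_1$ is $\mathcal H(T)$-free where $K\in\{K_{a-1},R(a-1,b-1)\}$ and $C_1$ is the distinguished class carrying the gadget $G_0$; and your disposal of splits on $A$ (matching number of $\tilde T$ jumps to $a-1+k$ while $|U|+\nu(G_0)\le (a-1)+(k-1)$) is exactly the paper's argument. But the proof stops where it matters most, and you say so yourself: the entire case analysis that actually uses the five hypotheses is presented as a plan (``I would \dots bound below \dots and weigh this against the capacity of $D$'') rather than carried out. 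That is a genuine gap, not a routine verification. In particular, your assertion that for $k$ even and for the $\mathcal H_1$-cases ``the $2K_{1,k-1}$-freeness of $D$, together with $\Delta(D)\le k-1$, is exactly what blocks every choice of $U$'' is not true as stated: forbidden-subgraph properties of $G_0$ alone do not suffice, and the paper's contradiction for placements using two or more cover vertices in $K$ comes from a \emph{vertex} count against $|V(G_0)|\le 2k$, not from $2K_{1,k-1}$-freeness.

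Concretely, the missing engine is the following. With no splits on $A$, set $A'=\tilde A\cap V(C_1)$, $B'=\tilde B\cap V(K)$, $t'=|B'|$; then $|A'|\ge t'+1$, and since $\tilde T[A',B']$ is a forest, the forest $T'$ of nontrivial components landing in $G_0$ satisfies $e(T')\ge k|A'|-(|A'|+|B'|-1)\ge (k-2)(t'+1)+2$. For $t'=0$ this forces a vertex of degree $\ge k$ in $G_0$; for $t'=1$ it forces either such a vertex or two disjoint $K_{1,k-1}$'s, which (by $|V(G_0)|\le 2k$) forces $T'=2K_{1,k-1}$ and $G_0=2K_k$, at which point the common neighbour in $B'$ lies in $B_0$ and its degree in $K$ is at most $b-1$, contradicting $\delta(B_0)=b+2$ (or contradicting $B_0=\emptyset$); for $t'\ge 2$ one gets $|V(T')|\ge e(T')+1\ge 3k-3>2k=|V(G_0)|$ for $k\ge 4$, with $k=2,3$ handled ad hoc. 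None of this counting appears in your write-up, and the $t'\ge2$ regime is not addressed at all. A secondary, smaller issue: your reduction ``so I may assume $\tilde T=T$'' for splits confined to $B$ is asserted but not justified; the cleaner route (taken by the paper) is not to reduce at all but to run the count above directly on $\tilde T$, since $B$-splits leave the degrees of $A$-vertices intact and preserve the forest structure used in the inequality.
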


\begin{proof}
	Choose $G\in \mathcal{H}_1(n,p,a,k)\cup \mathcal{H}_2(n,p,a,k)\cup \mathcal{H}_2(n,p,a,b-1,k)$. Without loss of generality, assume that $L_i(n_1,\ldots, n_p; k)$ $(i=1,2)$ is obtained by embedding a member, say $G_0$, of $\{R(2k-1, k-1), R(k+1, k-1)\cup K_{k-1}, 2K_k\}$ in the first class, denoted by $X_1$,  of $K(n_1,\ldots, n_p; p)$.
	Let $C_1=G[X_1]$. Then $\nu(C_1)\le k-1$ and $\Delta(C_1)\le k-1$.
	By Lemma~\ref{LEM: M=H}, it suffices to prove that $K_{a-1}\vee{C_1}$ or $R(a-1, b-1)\vee C_1$ is $\mathcal{H}(T)$-free.
	Write $K$ for $K_{a-1}$ or $R(a-1,b-1)$.
	
	Suppose to the contrary that there is a $\tilde{T}\in\mathcal{H}(T)$ such that $\tilde{T}\subseteq K\vee{C_1}$.
	Denote $A{'}=\tilde{A}\cap V(C_1)$, $B'=\tilde{B}\cap V(K)$ and $B''=\tilde{B}\setminus B'$. Let $t'=|B'|$.
	
	Let $A_1'$ and $B_1''$ be the sets of non-isolated vertices of $\tilde{T}[A', B'']$ in $A'$ and $B''$, respectively.
	Let $T'=\tilde{T}[A'_1, B''_1]$ be the forest consisting of nontrivial components of $\tilde{T}[A', B'']$.
	Then $\delta(T')\ge 1$.
	If $s\ge 1$, then by Lemma~\ref{LEM: splitting on $A$}, we have that $\nu(\tilde{T})\ge a-1+k$.
	Note that $B'$ is a vertex cover of $T-V(T')$.  By the K\"{o}nig's Theorem, $\nu(T-V(T'))=\beta(T-V(T'))\le |B'|\le a-1$. Thus $\nu(T')\ge \nu(T)-\nu(T-V(T'))\ge k$, a contradiction to $\nu(T')\le \nu(C_1)\le k-1$.
	
	Now assume $s=0$. Then $|A'|\ge t'+1$. Since $\tilde{T}[A{'}, B{'}]$ is a forest, we have  $e_{\tilde{T}}[A', B']\le |A'|+|B'|-1$.
	Therefore,
	\begin{eqnarray*}
		e(T')\ge k|A'|-(|A'|+|B'|-1)\ge (k-2)|A'|+2\ge (k-2)(t'+1)+2.
	\end{eqnarray*}

If $t'=0$ there will be at least one vertex in $A'_1$ with $d_{T'}(v)\ge k$, a contradiction to $\Delta(T')\le\Delta(C_1)\le k-1$.
	
If $t'= 1$, by the Pigeonhole principle, either there exits one vertex $u_0\in A_1'$ with $d_{T'}(u_0)\ge{k}$ or there are two vertices $u_1, u_2\in A_1'$ with $d_{T'}(u_i)= k-1$ for $i=1,2$.
For the former, we can get a contradiction with the same reason as the case $t'=0$.  For the latter, we have $N_T(u_1)\cap N_T(u_2)=B'$.
So $T'$ has two disjoint copies of $K_{1,k-1}$. By $|V(G_0)|\ge |V(T')|\ge 2k$. We have $T'=2K_{1,k-1}$, $G_0=2K_k$, and $G\in\mathcal{H}_2(n,p,a,b-1,k)$. And $T'=2K_{1,k-1}$ also implies that $A'=A_1'=\{u_1,u_2\}$ (otherwise, any vertex of $A'\setminus A_1'$ has degree at most $|B'|=1$, a contradiction to $\delta(A)\ge 2$). By the assumption of (4), $b>0$ and the vertex of $B'$ has degree at most $b-1$ in $R(a-1, b-1)$. So the degree of the vertex of $B'$ has degree at most $b+1$, a contradiction to $\delta(B_0)=b+2$.
	
	Now assume $t'\ge 2$. Then
\begin{equation}\label{EQ: V(T')}
2k=|V(G_0)|\ge |V(T')|\ge e(T')+1\ge (k-2)(t'+1)+3\ge 3k-3.
\end{equation}
When $k\ge 4$, we have $2k=|V(G_0)|\ge|V(T')|\ge 2k+1$ , a contradiction.
When $k=3$, we have that all the equalities hold in (\ref{EQ: V(T')}).  Hence $G_0=2K_k$ and $e(T')=5$.
But this is impossible since a spanning forest of $G_0$ has at most four edges.
	When $k=2$, we have $G_0\cong R(3,1)$ or $R(3,1)\cup K_1$ by the definition of $\mathcal{H}_i(n,p,a,k)$ for $i=1,2$. So $e(G_0)=1$. But $e(T')\ge 2$, a contradiction to $T'\subseteq G_0$.
\end{proof}

\section{Maximal $\{K_{1,k}, kK_2, 2K_{1,k-1}\}$-free graphs}

\begin{proof}
Suppose $G$ is a $\{K_{1,k}, kK_2, 2K_{1,k-1}\}$-free graph with maximum number of edges.
By Theorem~\ref{LEM: AHS}, $e(G)\le g_2(k)$. When $k$ is even, $e(G)\le k^2-\frac{3}{2}k$ and the equality holds if $G=R(2k-1, k-1)$ or $R(k+1, k-1)\cup K_{k-1}$. Clearly, $R(2k-1, k-1)$ and $R(k+1, k-1)\cup K_{k-1}$ are $2K_{1,k-1}$-free, so it is sufficient to show that there is no other extremal graph regardless of trivial components when $k$ is even.

First we note that $\nu(G)={k-1}$ (otherwise $G\cup{K_{1,k-2}}$ has more edges  but it is still $\{K_{1,k}, kK_2, 2K_{1,k-1}\}$-free, a contradiction).
We shall make use of the Gallai-Edmonds structure theorem~\cite{AK11}:
$G$ has a subset $S\subseteq V(G)$ such that (i) $$\nu(G)=\frac{1}{2}(|V(G)|+|S|-o(G-S)),$$ where $o(G-S)$ is the number of odd components of $G-S$; (ii) every odd component of $G-S$ is factor-critical; (iii) every even component of $G-S$ has a perfect matching; (iv) every maximum matching $M$ in $G$ saturates $S$, and every edge of $M$ incident with $S$ joins a vertex in $S$ to a vertex in an odd component of $G-S$.
Denote by $|S|=s$ and $o(G-S)=m$. Then $n+s-m=2k-2$.
Let $G_1, G_2,\ldots, G_{m'}$ be all the components of $G-S$. Then $m\le m'$. Let $n_i=|V(G_i)|$ for $i=1,2,\ldots, n_{m'}$. Without loss of generality, assume $n_1, \ldots, n_m$ are odd with $n_1\ge\ldots\ge{n_{m}}$ and $n_{m+1}, \ldots, n_{m'}$ are even with $n_{m+1}\ge \ldots\ge n_{m'}$.
Define $$f(x)=
	\begin{cases}
		{x\choose 2}	 & \text{if } x\le{k-1},\\
	\lfloor{\frac{(k-1)x}{2}}\rfloor & \text{if } x\ge{k}.
	\end{cases}$$
\begin{cl}\label{CL:f(x)}
(a) $f(x)$ is strictly increasing in $[1,+\infty)$.

(b) $f(x_1)+f(x_2)\le f(x_1+x_2-1)$ for odd integers $1\le x_1\le k-1$ and $x_1\le x_2$. Moreover, the equality holds if and only if $x_1=1$ when $x_1+x_2\le k$, and $x_1=1$ or $k-1$ when $x_2\ge k$.
\end{cl}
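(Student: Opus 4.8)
The plan is to prove both parts by elementary piecewise computation, organized around the break point $x=k-1$ versus $x=k$ in the definition of $f$, and (for $k=2$ the second branch of $f$ is degenerate, so I would treat $k\ge 3$, the only case relevant to the Gallai--Edmonds analysis, leaving $k=2$ to the separate small-case discussion). For part (a), I would check strict monotonicity piece by piece: on $\{1,\dots,k-1\}$ one has $\binom{x+1}{2}-\binom{x}{2}=x\ge 1$; across the break, $f(k)-f(k-1)=\frac{(k-1)k}{2}-\frac{(k-1)(k-2)}{2}=k-1\ge 1$ (using that $(k-1)k$ is even, so the floor is an equality there); and on $\{k,k+1,\dots\}$, $f(x+1)-f(x)=\lfloor\frac{(k-1)(x+1)}{2}\rfloor-\lfloor\frac{(k-1)x}{2}\rfloor\ge\lfloor\frac{k-1}{2}\rfloor\ge 1$ for $k\ge 3$. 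Concatenating the three pieces gives (a).

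For part (b), the backbone is the algebraic identity
$$\binom{x_1}{2}+\binom{x_2}{2}=\binom{x_1+x_2-1}{2}-(x_1-1)(x_2-1),$$
valid for all integers $x_1,x_2$ and verified by a one-line expansion. I would also record that, since $x_1$ is odd, $\binom{x_1}{2}=\frac{x_1(x_1-1)}{2}$ is an integer and, more to the point, $\frac{(x_1-1)(k-1-x_1)}{2}$ is a nonnegative integer; this integrality is what pins down the equality cases. Setting $y:=x_1+x_2-1$, I split into three exhaustive cases according to where $x_2$ and $y$ sit relative to $k-1$. Case A, $x_1+x_2\le k$ (so $x_1,x_2,y\le k-1$): the identity gives $f(x_1)+f(x_2)=f(y)-(x_1-1)(x_2-1)\le f(y)$, with equality iff $(x_1-1)(x_2-1)=0$, i.e. $x_1=1$ (using $x_1\le x_2$ and $x_1$ odd) — exactly the stated condition in this regime.

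Case B, $x_2\ge k$ (hence $y\ge k$, while $x_1\le k-1$): I would apply $\lfloor\alpha+\beta\rfloor\ge\lfloor\alpha\rfloor+\lfloor\beta\rfloor$ with $\alpha=\frac{(k-1)x_2}{2}$ and $\beta=\frac{(k-1)(x_1-1)}{2}$ to get $f(y)\ge f(x_2)+\lfloor\frac{(k-1)(x_1-1)}{2}\rfloor$, so it suffices that $\binom{x_1}{2}\le\lfloor\frac{(k-1)(x_1-1)}{2}\rfloor$; this holds because $\binom{x_1}{2}=\frac{x_1(x_1-1)}{2}\le\frac{(k-1)(x_1-1)}{2}$ (as $x_1\le k-1$) and the left side is an integer. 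For equality, the defect $\frac{(k-1)(x_1-1)}{2}-\binom{x_1}{2}=\frac{(x_1-1)(k-1-x_1)}{2}$ is a nonnegative integer, zero iff $x_1\in\{1,k-1\}$ and at least $1$ otherwise; combining with a direct check that $x_1\in\{1,k-1\}$ really does give equality in $f(x_1)+f(x_2)=f(y)$ (for $x_1=k-1$ one pulls the integer $\binom{k-1}{2}$ out of the floor, and $k-1$ odd forces $k$ even) yields the stated condition. Case C, $x_2\le k-1<y$: here necessarily $x_1\ge 3$; combining the identity with $f(y)\ge\frac{(k-1)y-1}{2}$ and writing $u=x_1-1\ge 1$, $v=x_2-1$ with $u\le v\le k-2$, I reduce to $f(x_1)+f(x_2)-f(y)\le\frac{(u+v+1)(u+v+1-k)+1}{2}-uv$; using $k\ge v+2$ to bound $u+v+1-k\le u-1$ and then $v\ge u$ collapses this to $\le -u/2<0$, a strict inequality, matching the fact that the "Moreover" clause only asserts equality in the two regimes $x_1+x_2\le k$ and $x_2\ge k$.

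The main obstacle is Case C: one must be careful that the floor in $f(y)$ costs at most $\tfrac12$, and the reduction to an inequality in $u,v$ has to invoke $k\ge v+2$ (from $x_2\le k-1$) before using $u\le v$, otherwise the estimate is not quite strong enough. Throughout, the parity hypothesis on $x_1$ is doing real work: it is exactly what makes the "defect" terms in Cases A and B integers, which is how the equality characterization comes out clean; without it there would be spurious equality cases (e.g. small even $x_1$).
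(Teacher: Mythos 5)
Your proposal is correct and follows essentially the same route as the paper: the same three-case split (according to whether $x_1+x_2\le k$, both $x_1,x_2\le k-1$ with $x_1+x_2\ge k+1$, or $x_2\ge k$), the same identity $\binom{x_1}{2}+\binom{x_2}{2}=\binom{x_1+x_2-1}{2}-(x_1-1)(x_2-1)$, and the same use of the parity of $x_1$ to move the term $\frac{(k-1)(x_1-1)}{2}$ through the floor; the paper computes the defect $\frac{(x_1-1)(k-1-x_1)}{2}$ exactly where you invoke superadditivity of the floor plus a direct equality check, which is an immaterial difference. Your explicit deferral of $k=2$ in part (a) is in fact slightly more careful than the paper, since for $k=2$ strict monotonicity of $f$ on $\{k,k+1,\dots\}$ genuinely fails (e.g.\ $f(2)=f(3)=1$).
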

\begin{proof}[Proof of Claim~\ref{CL:f(x)}:]
(a) It can be checked directly from the definition of $f(x)$.

(b) If $1\le x_1\le x_2\le k-1$ and $x_1+x_2\le k$, then
\begin{eqnarray*}
&&f(x_1+x_2-1)-f(x_1)-f(x_2)\\
&&=\frac{(x_1+x_2-1)(x_1+x_2-2)}2-\frac{x_1(x_1-1)}2-\frac{x_2(x_2-1)}2\\
                          &&=(x_1-1)(x_2-1)\\
                          &&\ge  0,
\end{eqnarray*}
and the equality holds if and only if $x_1=1$.

If $1\le x_1\le x_2\le k-1$ and $x_1+x_2\ge k+1$, then $k+1\le x_1+x_2\le 2k-2$. Hence we have $k\ge 3$.
So
\begin{eqnarray*}
f(x_1+x_2-1)-f(x_1)-f(x_2)
&=&\left\lfloor\frac{(k-1)(x_1+x_2-1)}2\right\rfloor-\frac{x_1(x_1-1)}2-\frac{x_2(x_2-1)}2\\
                          &\ge&\frac{(k-x_1)x_1+(k-1-x_2)(x_2-1)-1}2\\
                          &\ge & \frac{k-2}2\\
                          &> &0.
\end{eqnarray*}

If $1\le x_1\le k-1<x_2$, note that $x_1$ and $x_2$ are odd, then
\begin{eqnarray*}
f(x_1+x_2-1)-f(x_1)-f(x_2)
&=&\left\lfloor\frac{(k-1)(x_1+x_2-1)}2\right\rfloor-\frac{x_1(x_1-1)}2-\left\lfloor\frac{(k-1)x_2}2\right\rfloor\\
                          &=&\frac{(k-1-x_1)(x_1-1)}2\\
                          &\ge & 0.
\end{eqnarray*}
So we have $f(x_1+x_2-1)\ge f(x_1)+f(x_2)$ and the equality holds if and only if $x_1=1$ or $k-1$.

\end{proof}

\noindent{\bf Case 1:} $s=0$.

Then $n-m=2k-2$.  

\begin{cl}\label{CL:m=m'}
		$m=m'$.
\end{cl}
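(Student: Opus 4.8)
The plan is to argue by contradiction with the extremality of $G$: I will show that whenever $G$ has an even component, one can locally surger $G$ into a $\{K_{1,k},kK_2,2K_{1,k-1}\}$-free graph with strictly more edges. Before starting I make two harmless reductions. First, we may assume $k\ge 3$, since for $k\le 2$ we have $g_1(k)=g_2(k)$ and the whole statement follows from Theorem~\ref{LEM: AHS}. Second, we may assume $G$ has an isolated vertex $w$: adjoining one does not change $e(G)$, does not affect membership in the forbidden family, and does not change the Gallai--Edmonds data (both $s$ and $n-m$ are unchanged), so the extremal graph may as well contain $w$. Now suppose $m'>m$; then $G$ has an even component $C$, and since the even components of $G-S=G$ have perfect matchings, $|V(C)|$ is even with $|V(C)|\ge 2$.

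The core step is the replacement. Delete $V(C)\cup\{w\}$ — an \emph{odd} number $|V(C)|+1$ of vertices — and, on that same vertex set, insert a connected factor-critical graph $R$ with $\Delta(R)\le k-1$ and $e(R)=f(|V(C)|+1)$: take $R=K_{|V(C)|+1}$ if $|V(C)|+1\le k-1$, and otherwise a connected, factor-critical, almost-$(k-1)$-regular graph (e.g.\ a Hamilton cycle with chords added, which keeps factor-criticality). Call the result $G^{*}$. Then: (a) $e(G^{*})-e(G)=e(R)-e(C)\ge f(|V(C)|+1)-f(|V(C)|)>0$, using $e(C)\le f(|V(C)|)$ (which holds because $\Delta(G)\le k-1$, as $G$ is $K_{1,k}$-free) and the strict monotonicity of $f$ from Claim~\ref{CL:f(x)}(a); (b) $\nu(G^{*})=k-1$, since $C\cup\{w\}$ contributed $|V(C)|/2$ to the matching number and the factor-critical graph $R$ on $|V(C)|+1$ vertices again contributes exactly $|V(C)|/2$, so $G^{*}$ is still $kK_2$-free; (c) $\Delta(G^{*})\le k-1$, so $G^{*}$ is still $K_{1,k}$-free. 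Thus $G^{*}$ would beat $G$ — provided $G^{*}$ is still $2K_{1,k-1}$-free, which is the one delicate point.

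I expect this last condition to be the main obstacle, because $2K_{1,k-1}$-freeness is a \emph{global} condition: $G$ has at most one component containing a vertex of degree $k-1$ (call it \emph{rich}), and if the inserted $R$ is rich while a \emph{different} component of $G$ is rich, then $G^{*}$ contains $2K_{1,k-1}$. I would resolve this by cases. If $G$ has no rich component other than possibly $C$ itself, then $R$ may safely be the unique rich component and we are done. If the rich component of $G$ lies outside $C$, then $C$ is poor, i.e.\ $\Delta(C)\le k-2$, hence $e(C)\le f_{k-2}(|V(C)|)$ (the analogue of $f$ with $k-1$ replaced by $k-2$); now choose $R$ instead with $\Delta(R)\le k-2$ and $e(R)=f_{k-2}(|V(C)|+1)$, so that the same strict-monotonicity argument gives $e(G^{*})>e(G)$ while $R$ is poor, creating no second rich component. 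Such a poor factor-critical $R$ on $|V(C)|+1$ vertices exists whenever $k\ge 4$; for $k=3$ ``poor'' forces maximum degree $\le 1$, which may be impossible, but then $C=K_2$ and the identity $n-m=2k-2=4$ pins $G$ down, up to isolated vertices, to $C_3\cup K_2$ — and replacing these two components by $C_5$ already yields a graph in the family with more edges, the desired contradiction. This exhausts the cases, so $G$ has no even component and $m=m'$; the matching-number and maximum-degree conditions were routine, and only the $2K_{1,k-1}$ bookkeeping needed care.
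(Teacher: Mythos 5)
Your argument is correct in substance and is the same kind of argument as the paper's --- a local surgery that contradicts the maximality of $e(G)$ --- but the surgery itself is different, and the difference has real consequences. The paper deletes the \emph{largest odd} component $G_1$ and the \emph{largest even} component $G_{m+1}$ and replaces them by a single odd component on $n_1+n_{m+1}\le 2k-1$ vertices with $f(n_1+n_{m+1})$ edges; since any component containing a vertex of degree $k-1$ has at least $k$ vertices and at most one component has order $\ge k$, that component is necessarily $G_1$ or $G_{m+1}$ and gets absorbed, so the $2K_{1,k-1}$-freeness of the new graph is immediate and no case split is needed. You instead merge an arbitrary even component $C$ with an adjoined isolated vertex, which is why you are forced into the ``rich component elsewhere'' case, the $f_{k-2}$ variant, and the separate $k=3$ endgame. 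Both routes work; the paper's choice of which two components to merge is what buys it the shorter $2K_{1,k-1}$ check, while your version needs only the strict monotonicity of $f$ rather than the superadditivity in Claim~\ref{CL:f(x)}(b). One genuine point in your favour: the restriction to $k\ge 3$ is not merely convenient but necessary, since for $k=2$ the extremal graph is $K_2$ together with isolated vertices, which \emph{has} an even component, so the claim as literally stated fails there (and the paper's own strict inequality $f(n_1)+f(n_{m+1})<f(n_1+n_{m+1})$ degenerates to equality at $k=2$); your remark that the theorem for $k\le 2$ follows from Theorem~\ref{LEM: AHS} is the right way to dispose of this.

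A few small repairs you should make explicit. First, the internal $2K_{1,k-1}$-freeness of $R$ in your first case rests on $|V(R)|=|V(C)|+1\le 2k-1<2k$, which follows from $|V(C)|\le n-m=2k-2$ (the odd components contribute at least $m$ vertices to $n$); you never state this bound. Second, the connectivity and factor-criticality you impose on $R$ are superfluous: since $|V(R)|$ is odd you automatically have $\nu(R)\le |V(C)|/2$, so any (possibly disconnected) almost-regular realization, e.g.\ $R(|V(C)|+1,k-1)$ as defined in the paper, suffices, and this sidesteps the need to exhibit Hamiltonian almost-regular graphs. Third, in the $k=3$ endgame the structure forced by $n-m=4$ is $D\cup K_2$ with $D\in\{P_3,K_3\}$, not only $K_3\cup K_2$; the replacement by $C_5$ with $e(C_5)=5=g_1(3)$ beats both, so the conclusion stands.
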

Suppose to the contrary that $m<m'$.
Note that $$n_i+n_j\le{n-(m'-2)}\le{n-(m+1-2)}=2k-1$$ for any $1\le{i}\neq{j}\le{m'}$.
This implies that there is at most one component with at least $k$ vertices.
By Claim~\ref{CL:f(x)}, $$e(G_1)+e(G_{m+1})\le f(n_1)+f(n_{m+1})< f(n_1+n_{m+1}).$$
Let $G'$ be the graph obtained from $G$ by deleting the two components $G_1$ and $G_{m+1}$ and adding a new $n_1+n_{m+1}$-vertex component $G_1'$ with $\Delta(G_1')\le k-1$ and $f(n_1+n_{m+1})$ edges.
Then $\nu(G')\le\frac{|V(G')|+|S|-o(G'-S)}{2}=k-1$. Since $\Delta(G_1')\le k-1$, we have $\Delta(G')\le k-1$ too.  Since $n_1+n_{m+1}\le 2k-1$, $G_1'$ is $2K_{1,k-1}$-free. Note that $G_1$ and $G_{m+1}$ are largest odd and even components in $G$, respectively, and $G$ has at most one component of order at least $k$. So $G'$ is also $2K_{1,k-1}$-free. But $e(G')>e(G)$, a contradiction to the maximality of $e(G)$.

To complete the case, it is sufficient to show the following claim.
\begin{cl}
Either $n_1=2k-1$ and $n_j=1$ for $2\le{j}\le{m}$ or $n_1=k+1$, $n_2=k-1$ and $n_j=1$ for $3\le{j}\le{m}$ when $k$ is even.
\end{cl}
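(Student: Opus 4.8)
The plan is to show that in Case~1 an extremal $G$ can have at most two nontrivial components, and then to pin down their orders by a compression argument driven by Claim~\ref{CL:f(x)}(b).

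First I would dispose of the trivial direction: if $G$ has at most one nontrivial component, then since $\sum_i(n_i-1)=2k-2>0$ it has exactly one, with $n_1-1=2k-2$, i.e. $n_1=2k-1$ and $n_j=1$ for $j\ge2$; this is the first alternative. So assume $G$ has at least two nontrivial components and let $G_1,G_2$ be the two largest, with $n_1\ge n_2\ge 3$. From $(n_1-1)+(n_2-1)\le\sum_i(n_i-1)=2k-2$ we get $n_1+n_2-1\le 2k-1$, and any further nontrivial component $G_j$ satisfies $n_j\le k-1$ (otherwise $n_1,n_2,n_j\ge k$ would force $\sum_i(n_i-1)\ge 3(k-1)>2k-2$). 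The key move is to replace $G_1\cup G_2$ by a single graph $G_1^{*}$ on $n_1+n_2-1$ vertices with $\Delta(G_1^{*})\le k-1$ and exactly $f(n_1+n_2-1)$ edges (such a graph exists, namely $K_{n_1+n_2-1}$ or an almost $(k-1)$-regular graph), keeping every other component, and to call the result $G^{*}$. Then I would verify that $G^{*}$ is still $\{K_{1,k},kK_2,2K_{1,k-1}\}$-free: its maximum degree is at most $k-1$; its matching number is at most $\frac{1}{2}\sum_i(n_i-1)=k-1<k$; and since $G_1^{*}$ has fewer than $2k$ vertices while every other component has at most $k-1$ vertices, it cannot contain two vertex-disjoint copies of $K_{1,k-1}$. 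By maximality of $e(G)$ we get $e(G^{*})\le e(G)$, and cancelling the unchanged components yields the crucial inequality $f(n_1+n_2-1)\le e(G_1)+e(G_2)$.

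I would then split on the size of $n_2$. If $n_2\le k-1$, then $e(G_2)\le\binom{n_2}{2}=f(n_2)$ and $e(G_1)\le f(n_1)$, so the crucial inequality together with Claim~\ref{CL:f(x)}(b) forces $f(n_1)+f(n_2)=f(n_1+n_2-1)$; since $n_2\ge3$, the equality case of Claim~\ref{CL:f(x)}(b) then forces $n_2=k-1$ and $n_1\ge k$. As $k-1=n_2$ is odd, $k$ must be even, and $n_1-1\le 2k-2-(n_2-1)=k$ together with $n_1\ge k$ odd gives $n_1=k+1$; moreover $(n_1-1)+(n_2-1)=2k-2$ already exhausts the budget, so there is no third nontrivial component and the partition is exactly the second alternative (with $k$ even). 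If instead $n_2\ge k$ (so $n_1\ge n_2\ge k$ and $f(n_1)=\lfloor(k-1)n_1/2\rfloor$), then $G$ being $2K_{1,k-1}$-free means at most one of $G_1,G_2$ contains a $K_{1,k-1}$, so the other has maximum degree at most $k-2$; using $n_1\ge n_2$ this gives $e(G_1)+e(G_2)\le\lfloor(k-1)n_1/2\rfloor+\lfloor(k-2)n_2/2\rfloor$. A short computation with $n_1,n_2$ odd and $n_2\ge k$ shows $f(n_1+n_2-1)-\lfloor(k-1)n_1/2\rfloor-\lfloor(k-2)n_2/2\rfloor\ge 1$, contradicting the crucial inequality. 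Hence the case $n_2\ge k$ cannot occur, and only the two stated alternatives remain.

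I expect the main obstacle to be the bookkeeping in the last step: checking that the compressed graph $G^{*}$ genuinely avoids all three forbidden subgraphs, and verifying the inequality $f(n_1+n_2-1)-\lfloor(k-1)n_1/2\rfloor-\lfloor(k-2)n_2/2\rfloor\ge 1$ separately for the two parities of $k$, where the floor functions behave differently (it equals $(n_2-k+2)/2$ for odd $k$ and $(n_2-k+1)/2$ for even $k$, both $\ge1$). Everything else follows quickly from Claim~\ref{CL:f(x)} and the Gallai--Edmonds structure of the components.
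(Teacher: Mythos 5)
Your proof is correct and follows essentially the same strategy as the paper: merge the two largest nontrivial components $G_1,G_2$ into a single graph on $n_1+n_2-1$ vertices realizing $f(n_1+n_2-1)$ edges, check that the modified graph is still $\{K_{1,k},kK_2,2K_{1,k-1}\}$-free, and use maximality together with the equality analysis of Claim~\ref{CL:f(x)}(b) to force $n_2\in\{1,k-1\}$. One difference is worth highlighting in your favour: the paper simply asserts "$f(n_1)+f(n_2)<f(n_1+n_2-1)$ by Claim~\ref{CL:f(x)}" for all $n_2\notin\{1,k-1\}$, but Claim~\ref{CL:f(x)}(b) is only stated for $x_1\le k-1$, and in the remaining admissible configuration $n_1=n_2=k$ (possible only for odd $k$) the inequality is in fact reversed, since $2f(k)=k^2-k>k^2-\tfrac{3k-1}{2}=f(2k-1)$ for $k>1$. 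Your separate treatment of the case $n_2\ge k$ — using $2K_{1,k-1}$-freeness to force $\Delta\le k-2$ on one of the two components and then computing $f(n_1+n_2-1)-\lfloor(k-1)n_1/2\rfloor-\lfloor(k-2)n_2/2\rfloor\ge 1$ — is exactly what is needed to close this gap, so your write-up is actually more complete than the paper's at this point. The remaining steps (existence of the replacement graph, the matching-number bound $\nu(G^*)\le\frac12\sum(n_i-1)=k-1$ using that all components have odd order, and the parity bookkeeping for $n_1=k+1$, $n_2=k-1$ with $k$ even) all check out.
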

Recall that $n_1\ge n_2\ge \ldots\ge n_m$ and $n_i+n_j\le n-(m-2)=2k$.	So $n_1\le 2k-1$ and $1\le n_i\le k$.
Suppose $n_2\not=1$ and $k-1$.
	By Claim~\ref{CL:f(x)},  $f(n_1)+f(n_2)<f(n_1+n_2-1)$.
	Since $G$ is $2K_{1,k-1}$-free, there is at most one component $G_i$ with $\Delta(G_i)=k-1$. Without loss of generality, assume $\max\{\Delta(G_1),\Delta(G_2)\}=k-1$ if any.
	Let $G'$ be the graph obtained from $G$ by replacing components $G_1, G_2$ by two new components $G_1', G_2'$ such that $G_1'\cong R(n_1+n_2-1, k-1)$ and $G_2'\cong K_1$. Then $e(G_1')=f(n_1+n_2-1)$. Note that $n_1+n_2-1\le 2k-1$ and $\Delta(G_i)<k-1$ for $3\le i\le m$. It is easy to check that $G'$ is $\{K_{1,k}, kK_2, 2K_{1,k-1}\}$-free. But this is a contradiction to the maximality of $e(G)$.
	So $n_2=1$ or $k-1$. If $n_2=1$ then $n_2=\ldots=n_m=1$ because of $1\le n_m\le\ldots\le n_2=1$  and $n_1=n-(m-1)=2k-1$.
	If $n_2=k-1$ ($k$ must be even in this case), then by Claim~\ref{CL:f(x)}, $n_1\ge k$ is odd and $n_1+n_2\le 2k$, which implies that $n_1=k+1$ and $n_j=1$ for $3\le{j}\le{m}$.
 This completes the proof of the case $s=0$.

\medskip
\noindent{\bf Case 2:} $s\ge{1}$.
\medskip

By the Gallai-Edmonds structure theorem, $G_1\cup\ldots\cup G_{m'}$ is $\{K_{1,k}, (k-s)K_{2}, 2K_{1,k-1}\}$-free. So, from  Case 1,  we have $\sum_{i=1}^{m'}e(G_i)\le f(2k-2s-1)$, and the equality holds if and only if $m=m'$, $n_1=2k-2s-1$ and $n_i=1$ for $2\le i\le m$, {or $n_1=k-s+1$,  $n_2=k-s-1$ and $n_i=1$ for $3\le i\le m$ when $k-s$ is even (in fact, the case holds only for $s=1$)}. Now we claim that
$e(G)<f(2k-1)$ in this case. Note that
\begin{equation}\label{EQ: e3}
e(G)=e(S)+e[S,\cup_{i=1}^{m'}V(G_i)]+\sum_{i=1}^{m'}e(G_i)\le(k-1)s+f(2k-2s-1)\le f(2k-1),
\end{equation}
and the equality holds if and only if $d_G(v)=k-1$ for each $v\in S$, $m=m'$, $n_1=2k-2s-1\ge k$ and $n_i=1$ for $2\le i\le m$, or $n_1=k-s+1$,  $n_2=k-s-1$ and $n_i=1$ for $3\le i\le m$ when $k-s$ is even.
Therefore, the equality holds in (\ref{EQ: e3}) implies that $G_1=R(2k-2s-1, k-1)$ or $G_1=R(k-s+1, k-1)$ when $k-s$ is even and $d_G(v)=k-1$ for each $v\in S$.
If $k$ is odd, since $G$ is $K_{1,k}$-free,  any vertex in $S$ can not be adjacent to vertices in $V(G_1)$. So a vertex in $S$  and its neighbors form a $K_{1,k-1}$, which is disjoint with a $K_{1,k-1}$ in $G_1$, a contradiction.
 Now suppose $k$ is even. Since $n_1\ge k$ is odd, we have $n_1\not=k-s+1$, i. e., $n_1=2k-2s+1\ge k+1$. Let $u\in V(G_1)$ be the vertex with degree $k-2$. Since $G$ is $K_{1,k}$-free,  any vertex in $S$ can not be adjacent to vertices in $V(G_1)\backslash \{u\}$. So a vertex in $S$  and its neighbors form a $K_{1,k-1}$, which is disjoint with a $K_{1,k-1}$ in $G_1\backslash \{u\}$, a contradiction.

The proof of the theorem is completed.
\end{proof}

\section{Proof of Theorem~\ref{THM: Main}}
Define
$$h(n,p,a,k)=\left\{
                    \begin{array}{ll}
                     g(n,p,a)+g_1(k), & \hbox{$k$ is even;}\\
                     g(n,p,a)+g_2(k), & \hbox{$k$ is odd and $B_0=\emptyset$;} \\
                     g(n,p,a)+g_1(k), & \hbox{$k$ is odd and $b=0$ or $0<b\le a-1-\lceil{\frac{k-1}{a-1}\rceil}$;}\\
                     g(n,p,a,b-1)+g_2(k), & \hbox{$k$ is odd and $b\ge \max\{1,a-1-\lceil{\frac{k-1}{a-1}\rceil}\}$.}
                    \end{array}
                  \right.
$$
By Lemma~\ref{LEM: T^{p+1}}, $T^{p+1}\subseteq \ell P_2\vee K(2(p-1)\ell; p-1)$ and $T^{p+1}\subseteq P_\ell\vee K((p-1)\ell; p-1)$. By Theorems~\ref{thm2} and~\ref{thm3}, there exit $r=r(T^{p+1})$ and $n_0=n_0(r)$ such that $\mathcal{D}(n,p,r)$ contains a $T^{p+1}$-extremal graph  for every $n>n_0$. Furthermore,  every $T^{p+1}$-extremal graph $G\in \mathcal{D}(n,p,r)$ satisfies (i) and (ii) in Theorem~\ref{thm3}. Choose such  a $T^{p+1}$-extremal graph $G$ with maximum number of edges. By Lemma~\ref{LEM: EX-k-even},
we have $e(G)\ge h(n,p,a,k)$. Denote by $W$ the set of vertices in $V(G)\setminus V(G')$ which is joined to all the vertices of $G'$. Denote by $B_i$ the set of vertices in $V(G)\setminus V(G')$ which is joined to no vertices of $G^i$ but to all the vertices of the remaining $p-1$ classes. Let $C_i=G[V(G^i)\cup{B_i}]$.

\begin{cl}
	$|W|=a-1$.
\end{cl}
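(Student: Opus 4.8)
The plan is to establish the bound in both directions, leveraging the structure theorems already invoked. For the lower bound $|W|\ge a-1$, I would argue that if $|W|\le a-2$, then $G$ has fewer edges than the construction $h(n,p,a,k)$; the edges contributed by $W$ (which is joined completely to $G'$, a graph on roughly $n$ vertices) are the dominant linear-in-$n$ term, so losing even one vertex of $W$ costs about $n/p$ edges while the vertices of $B_i$ only compensate by about $(p-1)n/p$ each — but crucially, by Theorem~\ref{thm3}(i) the join graph $G'$ is a Tur\'an graph $T_{n',p}$, so any vertex not in $W$ contributes at most $(1-1/p)n' + O(1)$ edges, and I would count carefully that the total edge count cannot reach $e(G)\ge h(n,p,a,k) = g(n,p,a) + g_i(k) \ge e(T_{n-a+1,p}) + e(K_{a-1,n-a+1})$, whose second term forces essentially $a-1$ vertices fully joined to a class of size $\approx n - a + 1$. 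The point is that $g(n,p,a)$ is strictly larger than $g(n,p,a-1)$ plus the maximal amount a single $B_i$-type vertex can add, once $n$ is large.

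For the upper bound $|W|\le a-1$, I would suppose $|W|\ge a$ and derive that $T^{p+1}\subseteq G$, contradicting $T^{p+1}$-freeness. The idea: $W$ is completely joined to $G'=\vee_{i=1}^p G^i$, and $G'$ already contains huge complete $(p-1)$-partite pieces (pick any $p-1$ of the $p$ classes $G^i$). Using Lemma~\ref{LEM: M=H}, the decomposition family $\mathcal{M}(T^{p+1})=\mathcal{H}(T)$, so to embed $T^{p+1}$ into $K_{|W|}\vee K((p-1)t;p-1)$-type structure it suffices to embed some $\tilde T\in\mathcal{H}(T)$ into the appropriate place. With $|W|\ge a$ available as one "full" class and a class $G^i$ of size $\ge n/p - r$ available, one can embed $\tilde T = T$ itself (no splitting needed) using that $\beta(T)=a$ by Lemma~\ref{LEM: Beta-(i)-(iii)}(i): a vertex cover of size $a$ of $T$ can be placed partly in $W$, and in fact $a$ vertices of $W$ together with the rest in one class of $G'$ accommodate $T$, then blow up each edge using the remaining $p-2$ classes plus the abundant vertices — exactly the construction in Lemma~\ref{LEM: T^{p+1}}. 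So $|W|\ge a$ yields $T^{p+1}\subseteq G$.

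The main obstacle I anticipate is the upper bound direction: one must be careful that $W$ plus one class of $G'$ genuinely suffices to host a copy of $T$ with its edges blown up, i.e. that the $p-1$ remaining classes of the Tur\'an join structure (after using $W$ as an auxiliary class) really are large enough and really are complete to everything needed — here the hypothesis $p\ge 3$ and the bound $|V(T^{p+1})|=\ell$ being constant while $n\to\infty$ are what make it work, together with the fact (Lemma~\ref{LEM: M=H}, \ref{LEM: Beta-(i)-(iii)}) that $a = \beta(T)$ is precisely the number of "cover" vertices one needs. A secondary technical point is handling the exceptional vertices in $V(G)\setminus V(G')$ that are \emph{not} in $W$ and not in any $B_i$ — but Theorem~\ref{thm3}(ii) says every exceptional vertex is of one of exactly these two types, so this case does not arise. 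I would also need the precise inequality $g(n,p,a) - g(n,p,a-1) > \max_i (\text{max degree of a }B_i\text{ vertex}) + g_2(k)$ for large $n$ in the lower bound, which is a routine estimate since the left side is linear in $n$ with positive slope and the right side's $n$-dependence has strictly smaller slope.
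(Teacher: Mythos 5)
Your plan is correct and follows the paper's proof in both directions: for the upper bound, $|W|\ge a$ would let $T\in\mathcal{H}(T)=\mathcal{M}(T^{p+1})$ be embedded with $A$ into $W$ and $B$ into one class of $G'$, forcing $T^{p+1}\subseteq G$ via Lemma~\ref{LEM: M=H}; for the lower bound, $|W|=a-1-s$ with $s\ge 1$ loses roughly $sn/p$ edges against $h(n,p,a,k)$, which is exactly the paper's estimate $e(G)\le e(T_{n-a+1+s,p})+(a-1-s)n+o(n)<h(n,p,a,k)$. The only slip is cosmetic: an edge of $T$ placed between $W$ and $G^1$ is blown up using one vertex from each of the $p-1$ (not $p-2$) remaining classes.
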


\begin{proof}
	First, we show that	$|W|\le{a-1}$. Otherwise, $T$ can be embedded into $G[W]\vee{ C_1}$ since $|W|\ge |A|$ and $|V(C_1)|\ge |B|$, but this is impossible since $T\in\mathcal{H}(T)=\mathcal{M}(T^{p+1})$ by Lemma~\ref{LEM: M=H}.
	Now we claim that $|W|=a-1$. If not, suppose $|W|=a-1-s$ for some $s\ge{1}$. Then
	$$\begin{aligned}
	e(G)&\le e(T_{n-a+1+s,p})+(a-1-s)n+o(n)\\
	&=\frac{p-1}{2p}n^2+\frac{a-1-s}{p}n+o(n)\\
	&<\frac{p-1}{2p}n^2+\frac{a-1}{p}n+o(n)\\
	&=h(n,p,a,k),
	\end{aligned}$$
a contradiction to the maximality of $e(G)$.
\end{proof}


\begin{cl}\label{CL:Delta+nu}
	For any $1\le i\le p$ and any vertex $u\in{C_i}$, $d_{C_i}(u)+\sum\limits_{j\neq{i}}\nu(C_j)\le{k-1}$.
\end{cl}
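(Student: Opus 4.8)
The plan is to argue by contradiction: suppose that for some $i$ and some $u\in V(C_i)$ we have $d_{C_i}(u)+\sum_{j\ne i}\nu(C_j)\ge k$. The goal is to build a copy of $T^{p+1}$ inside $G$, contradicting that $G$ is $T^{p+1}$-free. By Lemma~\ref{LEM: M=H} we have $\mathcal{M}(T^{p+1})=\mathcal{H}(T)$, and a matching of size $e(T)$ lies in $\mathcal{M}(T^{p+1})$; more importantly, $\mathcal{M}(T^{p+1})=\mathcal{H}(T)$ tells us exactly which "decomposition" configurations, once placed in an independent class and joined to $K((p-1)t;p-1)$, force a $T^{p+1}$. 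So it suffices to exhibit in $G$ a structure of the following shape: one class (playing the role of the independent set of size $t$) containing a graph from $\mathcal{H}(T)$, together with $p-1$ further large classes each joined completely to it. Since $G\supseteq G'=\vee_{i=1}^p G^i$ after deleting $O(r)$ exceptional vertices, and $W$ (of size $a-1$, by the previous claim) is joined to everything, the classes $V(C_j)$ for $j\ne i$ supply the $p-1$ join-classes, and we need to find a member of $\mathcal{H}(T)$ inside $C_i$ together with $W$ — or rather inside $G$ restricted appropriately.

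The key mechanism I would use: a matching of size $\nu(C_j)$ in $C_j$ blows up, via the complete join to the other classes and to $W$, into a stock of $\nu(C_j)$ "private" $K_{p+1}$-gadgets available to host blown-up edges; collecting these over all $j\ne i$ gives $\sum_{j\ne i}\nu(C_j)$ such gadgets. On the other side, a vertex $u$ of degree $d_{C_i}(u)$ in $C_i$, together with its $d_{C_i}(u)$ neighbors, is a star $K_{1,d_{C_i}(u)}$ sitting inside one class; combined with $W$ (which has size $a-1$ and is completely joined to $C_i$), this furnishes $K_{a-1}\vee K_{1,d_{C_i}(u)}$. Now recall $a=|A|$, $k=\delta(A)$, and that $\delta(A)\ge 2$. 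A vertex $x\in A_0$ of $T$ has degree exactly $k$; splitting all of $A\setminus\{x\}$ (or rather choosing a suitable $\tilde T\in\mathcal{H}(T)$) one can realize $T$, or a split of it, as a subgraph of $K_{a-1}\vee\bigl(K_{1,k}\ \cup\ (\text{matching of size }e(T)-k)\bigr)$ — the star of $k$ edges at $x$ uses the $K_{1,d_{C_i}(u)}$ part (needing $d_{C_i}(u)\ge k$ would be too strong; instead the star at $x$ needs only $d_{C_i}(u)$ edges there and the remaining $k-d_{C_i}(u)$ edges at $x$ together with the other $e(T)-k$ edges are hosted by the $\sum_{j\ne i}\nu(C_j)$ matching-gadgets). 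Under the hypothesis $d_{C_i}(u)+\sum_{j\ne i}\nu(C_j)\ge k$ one checks $\mathcal{H}(T)\ni\tilde T$ embeds into the available structure, so $T^{p+1}\subseteq G$, a contradiction. I would formalize this by carefully choosing $\tilde T$: split every vertex of $A$ except one chosen $x\in A_0$, so that in $\tilde T$ the vertex $x$ keeps degree $k$ and everything else has degree $1$; then $\tilde T$ is (a subgraph of) $K_{1,k}$ plus an $(e(T)-k)$-edge matching on disjoint vertices, which by Lemma~\ref{LEM: M=H} witnesses membership in the decomposition family, and this is exactly what the counting above produces.

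The main obstacle I anticipate is bookkeeping the vertex counts and making the embedding rigorous: I must ensure the $p-1$ classes $V(C_j)$ and $W$ are genuinely large enough (they are, since each has $\approx n/p$ vertices while $T^{p+1}$ has bounded size $\ell$, and only $O(r)$ exceptional vertices were removed), that the matching-gadgets in distinct $C_j$'s and the star at $u$ use pairwise disjoint blow-up vertices, and that after routing some edges of the star at $x$ through the matching-gadgets (when $d_{C_i}(u)<k$) the remaining $\tilde T$-edges still have enough gadgets, i.e. that $d_{C_i}(u)+\sum_{j\ne i}\nu(C_j)\ge k$ together with $\sum_{j\ne i}\nu(C_j)$ being large-but-bounded relative to $e(T)$ suffices — here I would also invoke that $\nu(C_j)$ is bounded (each $C_j$ is essentially a clean class plus few exceptional vertices, so its matching number is $O(r)$), which keeps everything finite and lets me absorb all slack into the large common classes. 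A secondary subtlety is the case $i$ itself: the gadgets must avoid the copy of $T$'s class that lives in $V(C_i)$, but since we only use $u$ and its $\le k-1$ neighbors there, and $V(C_i)$ is large, this is fine. Modulo this routine embedding argument, the claim follows.
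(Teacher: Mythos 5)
Your core combinatorial idea is the right one and matches the paper's: assuming $d_{C_i}(u)+\sum_{j\ne i}\nu(C_j)\ge k$, place the minimum-degree vertex $\alpha\in A$ (which has degree $k$) at $u$, attach $d_{C_i}(u)$ of its star-edges to $N_{C_i}(u)$ and the remaining $k-d_{C_i}(u)$ to one endpoint $x_j$ of each of enough matching edges $x_jy_j$ taken from the classes $C_j$, $j\ne i$ (the partner $y_j$ supplying the one intra-class adjacency needed to complete the cross-class edge $ux_j$ to a $K_{p+1}$), and then use the largeness of the classes to make all the blow-up gadgets disjoint.

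However, two steps of your write-up fail as stated. First, the reduction to Lemma~\ref{LEM: M=H} does not go through: the decomposition family mechanism requires the member of $\mathcal{H}(T)$ to sit inside a \emph{single} class which is then joined to $p-1$ further large classes, whereas your witness structure (a star centred at $u\in C_i$ with leaves scattered over $C_i$ and several $C_j$'s, plus matching edges inside those $C_j$'s) spans several classes; any class containing a vertex of the witness is disqualified from serving as one of the $p-1$ join-classes, since its vertices are not mutually adjacent, so fewer than $p-1$ classes remain. The paper therefore does not invoke the decomposition family here but embeds $T$ directly and builds the $K_{p+1}$'s edge by edge. Second, your parenthetical claim that the remaining $k-d_{C_i}(u)$ star-edges \emph{together with the other $e(T)-k$ edges} are hosted by the $\sum_{j\ne i}\nu(C_j)$ matching-gadgets fails on a count: the hypothesis only guarantees $\sum_{j\ne i}\nu(C_j)\ge k-d_{C_i}(u)$, possibly with equality, leaving no gadgets for the other $e(T)-k$ edges. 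Those edges in fact need no gadgets at all: the correct completion maps $A\setminus\{\alpha\}$ onto $W$ (possible since $|W|=a-1$ by the preceding claim) and the rest of $B$ into the classes, so that every remaining edge runs between $W$ and a class and extends to a $K_{p+1}$ by choosing one fresh vertex from each of the other $p-1$ classes. (A minor further slip: splitting only $A\setminus\{x\}$ does not produce $K_{1,k}$ plus a matching, since the $B$-vertices keep their degrees; you would have to split all of $V(T)\setminus\{x\}$.) With these repairs your argument becomes the paper's proof.
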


\begin{proof}
	Suppose to the contrary that there exits an $i$ and  a vertex $u_0\in{V(C_i)}$  such that $d_{C_i}(u_0)+\sum\limits_{j\neq{i}}\nu(C_j)\ge{k}$. Without loss of generality, assume $u_0\in{C_1}$ and $N_{C_1}(u_0)=\{u_1,\ldots, u_s\}$. Let $x_1y_1, \ldots, x_ty_t$ be a matching in $\cup_{j=2}^p C_j$  with $s+t=k$. Choose $\alpha\in A$ with minimum degree in $T$ and let $T'=T-\alpha$. Then $d_T(\alpha)=k$ and
$T'$ has color classes $A\setminus\{\alpha\}$ and $B$. Note that $|A\setminus\{\alpha\}|=a-1=|W|$ and $n$ is large enough.
So we can embed $T'$ into $G$ such that $A\setminus\{\alpha\}=W$ and $B\subseteq (\cup_{i=1}^p V(C_i))\setminus\{u_0, u_1, \ldots, u_s, x_1, y_1, \ldots, x_t, y_t\}$.
Therefore, $T$ can be embedded into $G$ by putting $\alpha$ at $u_0$ and its neighbors at $u_1, \ldots, u_s, x_1, \ldots, x_t$.
Now we show that $T^{p+1}$ can be embedded in $G$ too.
For each edge  $\beta_i\gamma_j\in E(T')$ with $\beta_i\in{W}$ and $\gamma_j\in V(C_q)$ for some $1\le{q}\le{p}$.  Choose exactly one vertex, denoted by $\theta_{ij}^\ell$, in each $V(G^\ell)$ for $1\le{\ell}\le{p}$ and $\ell\neq{q}$. Then $G[\{\beta_i,\gamma_j\}\cup \{\theta_{ij}^\ell : 1\le{\ell}\le{p}   \mbox{ and } \ell\neq{q}\}]\cong K_{p+1}$ is a blow-up of  the edge $\beta_i\gamma_j\in E(T)$.
For each $u_0u_i\in E(T)$, $1\le{i}\le{s}$, choose exactly one vertex, denoted by $w_i^\ell$, in each $V(C_\ell)$ for $2\le{\ell}\le{p}$. Then $G[\{u_0,u_i,w_i^2,\ldots, w_i^p\}]\cong K_{p+1}$ forms a blow-up of the edge $u_0u_i$ for $1\le i\le s$.
For each $u_0x_i\in E(T)$ ($1\le{i}\le{t}$), assume $x_i\in{C_q}$ for some $2\le{q}\le{p}$. Choose exactly one vertex, denoted by $z_i^\ell$ in each $V(C_\ell)$ for $2\le{\ell}\le{p}$ and $\ell\neq{q}$. Then $G[\{u_0,x_i,y_i\}\cup \{z_i^\ell : 2\le{\ell}\le{p} \mbox{ and } \ell\neq{q}\}]\cong K_{p+1}$ is a blow-up of  the edge $u_0x_i$ for $1\le i\le t$. Since $n$ is sufficiently large and $|V(C_i)|\sim\frac {n}p$, we can choose the sets $\{\theta_{ij}^\ell : 1\le{\ell}\le{p}   \mbox{ and } \ell\neq{q}\}$, $\{w_i^2,\ldots, w_i^p\}$ and $\{z_i^\ell : 2\le{\ell}\le{p} \mbox{ and } \ell\neq{q}\}$ are pairwise disjoint, a contradiction to $G$ is $T^{p+1}$-free.
\end{proof}

\begin{cl}
	$\max\limits_{1\le i\le p}\Delta(C_i)=k-1$.
\end{cl}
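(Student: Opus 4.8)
The claim to prove is $\max_{1\le i\le p}\Delta(C_i)=k-1$. Here is my plan.

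\medskip

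The strategy is to prove the two inequalities $\max_i \Delta(C_i)\le k-1$ and $\max_i \Delta(C_i)\ge k-1$ separately. The upper bound is immediate from Claim~\ref{CL:Delta+nu}: taking any vertex $u\in C_i$ with $d_{C_i}(u)=\Delta(C_i)$ and dropping the nonnegative term $\sum_{j\ne i}\nu(C_j)$ gives $\Delta(C_i)\le k-1$ for every $i$, hence $\max_i\Delta(C_i)\le k-1$. So the real content is the lower bound: some $C_i$ must contain a vertex of degree $k-1$. I would argue this by contradiction, assuming $\Delta(C_i)\le k-2$ for all $i$, and showing that then $e(G)<h(n,p,a,k)$, contradicting the maximality of $e(G)$ (recall we already know $e(G)\ge h(n,p,a,k)$ from Lemma~\ref{LEM: EX-k-even}).

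\medskip

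To make the edge count precise, I would first recall the structure obtained so far: $V(G)=W\cup\bigcup_{i=1}^p V(C_i)$ with $|W|=a-1$, each vertex of $W$ joined to everything, the $p$ classes $V(G^i)$ forming a complete $p$-partite Tur\'an-like skeleton, each exceptional vertex of $B_i$ joined to all classes but $G^i$, and the only "extra" edges beyond this skeleton lying inside the $C_i$'s. Thus
\[
e(G)=e\big(G[W]\big)+e\big(W,\textstyle\bigcup_i V(C_i)\big)+e\big(\text{complete }p\text{-partite part}\big)+\sum_{i=1}^p e(C_i),
\]
and the first three terms are maximized exactly as in $g(n,p,a)$ (or $g(n,p,a,b-1)$), independent of the internal structure of the $C_i$'s up to lower-order terms. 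So everything reduces to bounding $\sum_i e(C_i)$. Under the assumption $\Delta(C_i)\le k-2$ for all $i$, Claim~\ref{CL:Delta+nu} still forces $\Delta(C_i)+\sum_{j\ne i}\nu(C_j)\le k-1$; combined with $\nu(C_i)\le\Delta(C_i)\le k-2$ this tightly constrains the sizes. The key observation is that each $C_i$ is then a $\{K_{1,k-1}, \,k'K_2\}$-type graph for a small $k'$, and one should apply Theorems~\ref{LEM: AHS} and~\ref{LEM: CH} to bound $e(C_i)$. Comparing $\sum_i e(C_i)$ in this degenerate regime against the value $g_1(k)$ or $g_2(k)$ that a single $C_i$ with a vertex of degree $k-1$ contributes (which is what $H_1$ or $H_2$ achieves) shows a strict loss, i.e. $\sum_i e(C_i)< g_*(k)$, hence $e(G)<h(n,p,a,k)$, the desired contradiction.

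\medskip

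The main obstacle, I expect, is the bookkeeping in the last step: one must check across all the cases of the definition of $h$ (even $k$, odd $k$ with $B_0=\emptyset$, odd $k$ with $b$ small, odd $k$ with $b$ large) that the best possible $\sum_i e(C_i)$ under $\Delta(C_i)\le k-2$ genuinely falls short of the target constant, and simultaneously handle the contribution of the $B_i$ vertices correctly (since those sit in $C_i$ and affect both $\nu(C_i)$ and $\Delta(C_i)$, but they are few — at most $r$ of them — so their effect on the $p$-partite skeleton count must be tracked to $o(n)$ precision). A secondary subtlety is ensuring $\nu(C_j)$ can indeed be taken small for all but possibly one index; if two distinct $C_j$'s had large matching number, Claim~\ref{CL:Delta+nu} would immediately bound every $\Delta(C_i)$ very small and the edge loss is even more severe, so that sub-case is easy, but it must be stated. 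Once the arithmetic is organized by "which single $C_i$ carries the extra edges," the strict inequality $g_2(k-2\cdot 1)+\text{(corrections)}<g_*(k)$ type comparisons follow from elementary estimates on the quadratic functions $g_1,g_2$.
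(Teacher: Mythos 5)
Your overall strategy is the same as the paper's: the upper bound $\max_i\Delta(C_i)\le k-1$ is immediate from Claim~\ref{CL:Delta+nu}, and the lower bound goes by contradiction, assuming $\Delta(C_i)\le k-2$ for all $i$ and showing that $\sum_i e(C_i)$ then falls strictly short. But the heart of the argument is missing, and the one concrete inequality you write down is false: you assert $\nu(C_i)\le\Delta(C_i)\le k-2$, whereas a graph's matching number is in general not bounded by its maximum degree (e.g.\ $C_i$ could be a disjoint union of up to $k-1$ independent edges, with $\Delta(C_i)=1$ and $\nu(C_i)=k-1$; Claim~\ref{CL:Delta+nu} applied to a vertex of a \emph{different} class only gives $\nu(C_i)\le k-1$). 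This is precisely the dichotomy the proof must handle. If some $i$ has $\nu(C_i)\le\Delta(C_i)$, then Theorem~\ref{LEM: CH} (via $f(\nu,\Delta)\le\nu(\Delta+1)$) together with Claim~\ref{CL:Delta+nu} gives $\sum_j e(C_j)\le(k-1)\bigl(\sum_{j\ne i}\nu(C_j)+\nu(C_i)\bigr)\le(k-1)\bigl(k-1-\Delta(C_i)+\nu(C_i)\bigr)\le(k-1)^2<k^2-\tfrac32k$ for $k\ge3$. Otherwise $\nu(C_j)\ge\Delta(C_j)+1$ for every $j$, so every class carries an edge, $s:=\min_j(\nu(C_j)-\Delta(C_j))\ge1$, Claim~\ref{CL:Delta+nu} forces $\Delta(C_j)\le k-1-(p-1)(s+1)$, and one computes $\sum_j e(C_j)\le(k-1+s)\bigl[k-(p-1)(s+1)\bigr]<k^2-\tfrac32k$ using $p\ge3$. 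Your proposed sub-case split (``two distinct $C_j$'s with large matching number'') is not the right one, and ``the edge loss is even more severe'' is exactly the assertion that needs the computation above; as written this is a genuine gap, not just bookkeeping.

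Two smaller points. The case analysis over the four branches of $h(n,p,a,k)$ that you anticipate is unnecessary: in every branch $h(n,p,a,k)\ge g(n,p,a)+k^2-\tfrac32k$ (since $g_1(k),g_2(k)\ge k^2-\tfrac32k$, and in the last branch $g(n,p,a,b-1)+g_2(k)\ge g(n,p,a)+g_1(k)$ by the hypothesis on $b$), while the skeleton contributes at most $g(n,p,a)$; so the single target inequality $\sum_i e(C_i)<k^2-\tfrac32k$ suffices uniformly. Also, $k=2$ must be treated separately (there $\Delta(C_i)\le k-2=0$ makes every $C_i$ edgeless and the conclusion is immediate), since the estimates above require $k\ge3$.
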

\begin{proof}
If not then $\Delta(C_i)\le{k-2}$ for any $1\le{i}\le{p}$. To get a contradiction, it suffices to prove that $\sum_{i=1}^pe(C_i)<k^2-\frac{3}{2}k$.
If $k=2$, then $\Delta(C_i)=0$ for any $1\le{i}\le{p}$, we are done. Now assume $k\ge 3$.
	
If there exits an $i$ such that $\nu(C_i)\le\Delta(C_i)$, then by Theorem~\ref{LEM: CH} and Claim~\ref{CL:Delta+nu},
$$\begin{aligned}
	\sum_{j=1}^pe(C_j)&\le{\sum_{j=1}^pf(\nu(C_j),\Delta(C_j))}\\
	&\le{\sum_{j=1}^p{\nu(C_j)(\Delta(C_j)+1)}}\\
	&\le{(k-1)\left(\sum_{j\neq{i}}\nu(C_j)+\nu(C_i)\right)}\\
	&\le{(k-1)\left(k-1-\Delta(C_i)+\nu(C_i)\right)}\\
	&\le{(k-1)^2}\\
    &< k^2-\frac{3}2k.
	\end{aligned}$$

If $\nu(C_i)\ge{\Delta(C_i)+1}$ for any $1\le{i}\le{p}$ then $\Delta(C_i)\ge 1$. Denote by $s=\min\limits_j\{\nu(C_j)-\Delta(C_j)\}$. Then $\nu(C_j)\ge{s+1}$ for any $1\le{j}\le{p}$. By Claim~\ref{CL:Delta+nu}, we have $\Delta(C_j)\le{k-1-(p-1)(s+1)}$ for any $1\le{j}\le{p}$. Choose $i$ such that $\nu(C_i)-\Delta(C_i)=s$.  Again by Theorem~\ref{LEM: CH} and Claim~\ref{CL:Delta+nu}, we have
$$\begin{aligned}
	\sum_{j=1}^pe(C_j)&\le{\sum_{j=1}^pf(\nu(C_j),\Delta(C_j))}\\
    &\le{\sum_{j=1}^p{\nu(C_j)(\Delta(C_j)+1)}}\\
	&\le{\left(\sum_{j\neq{i}}\nu(C_j)+\nu(C_i)\right)[k-(p-1)(s+1)]}\\
	&\le{(k-1+s)[k-(p-1)(s+1)]}\\
	&=k^2-(ps-2s+p)k-(s-1)(p-1)(s+1)\\
	&<{k^2-\frac32k}.
	\end{aligned}$$
This completes the proof of the claim.
\end{proof}

Without loss of generality, assume that $\Delta(C_1)=k-1$. By Claim~\ref{CL:Delta+nu}, we have $\sum\limits_{j=2}^p\nu(C_j)=0$, i.e.
$e(C_j)=0$ \mbox{ for any $2\le j\le p$}, and $\nu(C_1)\le k-1$. So
\begin{equation}\label{EQ: e4}
\sum_{i=1}^p e(C_i)=e(C_1)\le f(\Delta(C_1), \nu(C_1))\le f(k-1,k-1)=g_2(k).
\end{equation}
Therefore,
\begin{eqnarray}\label{EQ: e5}
\begin{aligned}
	e(G)&\le e(C_1\vee\ldots\vee C_p)+e(W, \cup_{i=1}^p V(C_i))+e(G[W])+\sum_{i=1}^p e(C_i)\\
    &\le e(T_{n-a+1,p})+(a-1)(n-a+1)+e(G[W])+e(C_1)\\
    &\le g(n,p,a)+g_2(k),
	\end{aligned}
\end{eqnarray}
and the equality holds if and only if $C_1\vee\ldots\vee C_p\cong T_{n-a+1,p}$, $G[\cup_{i=1}^pV(C_i)\cup W]\cong K_{a-1}\vee T_{n-a+1,p}$, $G[W]\cong K_{a-1}$,
and $e(C_1)=g_2(k)$.
So we have $e(G)\le h(n,p,a,k)$ when $k$ is even or $k$ is odd and $B_0=\emptyset$. Note that $e(G)\ge h(n,p,a,k)$ by assumption and $g_1(k)=g_2(k)$ when $k$ is even. Therefore, $e(G)=h(n,p,a,k)$ and
$G\cong H_1(n,p,a,k)$ or $H_2(n,p,a,k)$ for even $k$ and $G\cong H_2(n,p,a,k)$ when $k$ is odd and $B_0=\emptyset$.

Now suppose $k$ is odd and $B_0\not=\emptyset$.	We have the following claim.
\begin{cl}\label{CL:G[W]}
Either $C_1$ is $2K_{1,k-1}$-free or $G[W]$ is $K_{1,b}$-free.
\end{cl}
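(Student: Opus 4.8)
The plan is to prove Claim~\ref{CL:G[W]} by contradiction, exploiting the structure that has already been forced on $G$: we are in the regime where $k$ is odd, $B_0\neq\emptyset$, $G[W]\cong K_{a-1}$ (this follows from the equality analysis leading to~(\ref{EQ: e5}), or more precisely from the fact that $e(G)\ge h(n,p,a,k)$ still forces $G[W]$ to carry many edges), and $C_1$ carries essentially all the edge content in $\cup_i V(C_i)$ with $e(C_1)$ close to $g_2(k)$. Suppose both alternatives fail: $C_1$ contains a copy of $2K_{1,k-1}$, say two vertex-disjoint stars with centers $c_1,c_2$ and leaf sets $L_1,L_2$ of size $k-1$ each, all inside $V(C_1)$; and $G[W]$ contains a copy of $K_{1,b}$, i.e.\ a vertex $w_0\in W$ together with $b$ further vertices of $W$ forming a star (since $G[W]\cong K_{a-1}$ this just says $a-1\ge b+1$, but the relevant object is the star itself). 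The goal is to embed some $\tilde T\in\mathcal{H}(T)=\mathcal{M}(T^{p+1})$ into $G$, which by Lemma~\ref{LEM: M=H} contradicts $G$ being $T^{p+1}$-free.

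The key idea is to use the definition of $B_0$: there is a vertex $y\in B$ with $|N_T(y)\cap A_0|\ge 2$, where $A_0$ is the set of minimum-degree vertices of $A$, and $\delta(B_0)=b+2$, so $y$ has degree at least $b+2$ in $T$. Pick such a $y$ together with two neighbors $\alpha_1,\alpha_2\in A_0$, each of degree exactly $k$ in $T$. The remaining $\ge b$ neighbors of $y$ lie in $A$. I would embed the ``core'' $\{\alpha_1,\alpha_2,y\}$ and $y$'s other neighbors so that $\alpha_1\mapsto c_1$, $\alpha_2\mapsto c_2$ (the two star centers inside $C_1$), the $k-1$ tree-neighbors of $\alpha_i$ other than $y$ go to the leaves $L_i$ inside $V(C_1)$, the vertex $y$ goes to $w_0\in W$ (which is adjacent to everything in $C_1$, in particular to $c_1$ and $c_2$), and the $\ge b$ remaining neighbors of $y$ in $A$ go to the other $\ge b$ vertices of $W$ (using the $K_{1,b}$ in $G[W]$, i.e.\ that $a-1\ge b+1$ so $W$ is large enough). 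Then the rest of $T$ — which hangs off $B\setminus\{y\}$ and $A\setminus\{\alpha_1,\alpha_2\}$ — gets embedded greedily into the union of the $C_i$'s and the rest of $W$ using that $n$ is large and $|V(C_i)|\sim n/p$; this is the same routine embedding argument used in the proof of Claim~\ref{CL:Delta+nu}. Finally one blows up each edge using the extra $p-2$ (or $p-1$) classes, exactly as in Lemma~\ref{LEM: T^{p+1}} and Claim~\ref{CL:Delta+nu}, since every edge of the embedded tree runs either between $W$ and some $C_i$ or inside $C_1$, and in both cases there are $p-1$ fully-joined classes available to supply the clique vertices.

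There is a subtlety in the degree bookkeeping that is the main obstacle. For the embedding $\alpha_i\mapsto c_i$ to be legitimate we need $c_i$ to have $\ge k-1$ neighbors inside $V(C_1)$ (supplied by $L_i$) \emph{plus} adjacency to $w_0$; that is fine because $w_0\in W$ is joined to all of $V(C_1)$. But we must also check that $c_1,c_2$ and the $2(k-1)$ leaves are genuinely all distinct — this is exactly the assertion that $C_1$ contains $2K_{1,k-1}$, so it holds by assumption. The more delicate point is matching up the \emph{split} structure: if $\alpha_1$ or $\alpha_2$ got split in $\tilde T$ the degree-$k$ requirement changes, so one should choose $\tilde T$ to be $T$ itself (no splits on $A_0$) — allowed since $T\in\mathcal{H}(T)$ — and then the only constraint is $\deg_T(\alpha_i)=k=1+(k-1)$, matching ``one edge to $y\mapsto w_0$, $k-1$ edges to leaves in $C_1$.'' The other place to be careful is that $y$'s neighbors other than $\alpha_1,\alpha_2$ all lie in $A$ (true, $T$ is bipartite and $y\in B$), that there are at least $b$ of them (true since $\deg_T(y)\ge b+2$), and that $W\setminus\{w_0\}$ has size $a-2\ge b-?$; here one uses $b\le a-1$, which holds in all the odd-$k$, $B_0\ne\emptyset$ subcases of Theorem~\ref{THM: Main}, so $|W\setminus\{w_0\}|=a-2\ge b-1$ — and if equality is tight one squeezes the last neighbor of $y$ into $W$ as well using $|W|=a-1\ge \deg_{\text{into }W}(y)$. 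I expect the write-up to consist mainly of carefully listing which tree-vertices land in $W$, which in $V(C_1)$, which in $\cup_{i\ge 1}V(C_i)$, verifying adjacencies, and then invoking the now-standard blow-up step; the genuinely new content is just the observation that a $2K_{1,k-1}$ in $C_1$ together with a $K_{1,b}$ in $G[W]$ is precisely what is needed to accommodate a $B_0$-vertex together with its two $A_0$-neighbors.
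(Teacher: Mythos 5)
Your proposal is correct and follows essentially the same route as the paper: both argue by contradiction, choosing a vertex of $B_0$ with its two $A_0$-neighbors, placing the $B_0$-vertex at the center of the $K_{1,b}$ in $G[W]$ and the two $A_0$-neighbors at the centers of the two disjoint copies of $K_{1,k-1}$ in $C_1$, sending $A\setminus\{\alpha_1,\alpha_2\}$ to $W\setminus\{w_0\}$ and the rest of $B$ into $C_1$, and then invoking Lemma~\ref{LEM: M=H} (the paper embeds $T$ into $G[W]\vee C_1$ directly rather than redoing the blow-up by hand, but that is cosmetic). The one point to tighten is that $y$ should be chosen with $d_T(y)$ \emph{exactly} $b+2$ (possible since $\delta(B_0)=b+2$), so that its remaining neighbors are exactly $b$ in number and fit on the leaves of the $K_{1,b}$; ``at least $b$ of them'' would overload the star.
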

\begin{proof}
Assume that there exist two disjoint copies of $K_{1,k-1}$ in $C_1$ and a copy of $K_{1,b}$ in $G[W]$.
We claim that $T$ can be embedded in  $G[W]\vee C_1$ and so $T^{p+1}\subseteq G$ by Lemma~\ref{LEM: M=H}, this is a contradiction.
Choose a vertex $u_0\in{B_0}$ with $d_T(u_0)=b+2$ and its two neighbors $u_1, u_2$ in $A_0$. Then $d_T(u_1)=d_T(u_2)=k$ and $(N_T[u_1]\setminus\{u_0\})\cap (N_T[u_2]\setminus\{u_0\})=\emptyset$.
Then we can embed $u_0$ at the center of a copy of $K_{1,b}$ in $G[W]$ and embed $N_T[u_1]\setminus\{u_0\}$ and $N_T[u_2]\setminus\{u_0\}$ into two disjoint copies of $K_{1,k-1}$ in $C_1$ (this can be done by embedding $u_1$ and $u_2$ at the centers of the two $K_{1,k-1}$).
Next, embedding $A\setminus\{u_1, u_2\}$ into $W\setminus\{u_0\}$ (this can be done since $|A\setminus\{u_1,u_2\}|=|W\setminus\{u_0\}|=a-2$) and the rest vertices of  $B\setminus\{u_0\}$ into the vertices not used before in $C_1$ (it can be done since $|V(C_1)|$ is big enough than $|B|$), we get an embedding of $T$ in $G[W]\vee C_1$. The claims holds.

	
	
	
	
\end{proof}

By Claim~\ref{CL:G[W]} and Theorem~\ref{LEM: 2K_{1,k-1}}, we have $e(C_1)\le g_1(k)$ or $G[W]\cong R(a-1,b-1)$ ($b>0$).
By (\ref{EQ: e5}),
$$e(G)\le\max\{g(n,p,a)+g_1(k), g(n,p,a,b-1)+g_2(k)\},$$
and the equality holds if and only if
\begin{itemize}
\item[(i)]
 $G\cong H_1(n,p,a,k)$ when $b=0$ or $b>0$ and $g(n,p,a)+g_1(k)>g(n,p,a,b-1)+g_2(k)$ which is equivalent to $0<b< a-1-\lceil{\frac{k-1}{a-1}\rceil}$;
\item[(ii)]
$G\cong H_2(n,p,a,b-1)$ when $b>0$ and $g(n,p,a)+g_1(k)<g(n,p,a,b-1)+g_2(k)$, which is equivalent to $b> \max\{0,a-1-\lceil{\frac{k-1}{a-1}\rceil}\}$;

\item[(iii)]
$G\cong H_1(n,p,a,k)$ or $H_2(n,p,a,b-1)$ when $b>0$ and $g(n,p,a)+g_1(k)=g(n,p,a,b-1)+g_2(k)$, which is equivalent to $b=a-1-\lceil{\frac{k-1}{a-1}\rceil}>0$.

\end{itemize}
The uniqueness of the extremal graphs for the cases (i) and (ii) comes from Theorem~\ref{thm2} directly.

This completes the proof of Theorem~\ref{THM: Main}.


\section{Discussions and remarks}
Combining Theorem~\ref{THM: Main} and the result (d) given by Liu~\cite{L13}, the Tur\'an problem for $T^{p+1}$ with $p\ge 3$ has been resolved except for the case $\delta(A)=1$ and $\alpha(T)\not=B$. So there are two natural problems to be considered further.
\begin{prob}
(P1) Given $p\ge 3$ and a tree $T$ such that its two color classes $A$ and $B$ satisfying $|A|\le{|B|}$, determine $\ex(n,T^{p+1})$ when $\delta(A)=1$ and $\alpha(T)\not= B$.

(P2) Determine $\ex(n,T^{p+1})$ when $p=2$.
\end{prob}

\noindent{\bf Remarks:}
(1) For (P1), let $\mathcal{C}_T=\{ C : C\subseteq T \mbox{ and $V(C)$ is a vertex cover of $T$}\}$ and $K$ be a $\mathcal{C}_T$-free graph on $a-1$ vertices. We guess that the graph with  the structure $K\vee T_{n-a+1,p}$ is an extremal graph for $T^{p+1}$. The difficult thing is that we have no idea about the structure of $K$ and its number of edges.

(2) For (P2), we need new techniques to avoid Lemma~\ref{LEM: M=H} (given by Liu~\cite{L13}, which requires $p\ge 3$), and the performance for $p=2$ maybe different from $p\ge 3$.




\end{document}